\patchcmd{\thebibliography}{\section*{\refname}}{}{}{}
\newtheorem{theorem}{Theorem}[section]
\newtheorem{corollary}{Corollary}[theorem]
\newtheorem{lemma}[theorem]{Lemma}
\newtheorem{assumption}[theorem]{Assumption}
\newtheorem{proposition}[theorem]{Proposition}
\theoremstyle{definition}
\newtheorem{definition}[theorem]{Definition}
\newtheorem{example}[theorem]{Example}
\newtheorem{remark}[theorem]{Remark}
\let\oldbibliography\thebibliography
\renewcommand{\thebibliography}[1]{%
  \oldbibliography{#1}%
  \setlength{\itemsep}{-1.5mm}%
}
\def\R{\mathbb{R}}
\def\N{\mathbb{N}}
\def\P{\mathbb{P}}
\newcommand{\be}{\begin{equation}}
\newcommand{\ee}{\end{equation}}
\newcommand{\bea}{\begin{eqnarray}}
\newcommand{\eea}{\end{eqnarray}}
\newcommand{\beann}{\begin{eqnarray*}}
\newcommand{\eeann}{\end{eqnarray*}}
\newcommand{\benn}{\begin{equation*}}
\newcommand{\eenn}{\end{equation*}}
\renewcommand{\P}{\mathbb{P}}
\title{Spectral theory of dense hypergraph limits}
\author[1,2]{Ágnes Backhausz\thanks{\texttt{backhausz.agnes@renyi.hu}}}
\author[3,4]{Christian Kuehn\thanks{\texttt{ckuehn@ma.tum.de}}}
\author[1,2]{Sjoerd van der Niet\thanks{\texttt{van.der.niet.sjoerd@renyi.hu}}}
\author[5,6]{Giulio Zucal \thanks{\texttt{zucal@mpi-cbg.de}}}
\affil[1]{ELTE E\"otv\"os Lor\'and University, Faculty of Science, Institute of Mathematics, P\'azm\'any P\'eter s\'et\'any 1/c, Budapest, Hungary}
\affil[2]{HUN-REN Alfr\'ed R\'enyi Institute of Mathematics, Re\'altanoda utca 13-15., Budapest, Hungary}
\affil[3]{Munich Data Science Institute, Walther-Von-Dyck Str.~10, 85748 Garching b.~M\"unchen, Germany}
\affil[4]{{Technical University of Munich, School of Computation Information and Technology, Department of Mathematics, Boltzmannstr.~3, 85748 Garching b.~M\"unchen, Germany}}
\affil[5]{Max Planck Institute of Molecular Cell Biology and Genetics, Dresden, Germany}
\affil[6]{Center for Systems Biology Dresden, Germany}
\date{\today}
\begin{document}
\maketitle

\begin{abstract}

In this work, we develop a spectral theory for hypergraph limits. We prove the convergence of the spectra of adjacency and Laplacian matrices for hypergraph sequences converging in the $1$-cut metric. On the other hand, we give examples of matrix operators associated with hypergraphs whose spectra are not continuous with respect to the $1$-cut metric. Furthermore, we show that these operators are continuous with respect to other cut norms.

    \vspace{0.2cm}
\noindent {\bf Keywords:}  Graph limits, hypergraphs, spectral graph theory, graph homomorphism, Laplacian. 

    \vspace{0.2cm}
\noindent {\bf  Mathematics Subject Classification Number:}  05C65, 37A30. 
\end{abstract}

\section{Introduction}

Large networks are ubiquitous in applications, for example in neurobiology, economics, urban systems, epidemiology and electrical power grids. Networks are used to represent interactions between agents, pairwise (between two agents) or higher-order (between multiple agents at the same time). The typical mathematical objects to represent pairwise interactions are graphs. However, in many applications, the networks considered are extremely large. For this reason, recent years have seen a rapid development of graph limit theory \cite{LovaszGraphLimits}, where the limit objects are typically idealised analytical objects encoding only the relevant information about a large network. 

Graph limit theory for dense simple graph sequences (sequences where the number of edges grows proportionally to the square of the number of vertices) is the most well-developed area of the field \cite{BORGS20081801,Lovsz2007SzemerdisLF,LOVASZ2006933,borgs2011convergentAnnals}.
The theory for bounded-degree sequences of graphs, where each vertex has a uniformly bounded number of neighbours, has also been extensively studied \cite{BenjaminiLimit,local-global1,Hatami2014LimitsOL}.
By contrast, the limit theory for graphs of intermediate density remains much less understood, though it has recently attracted substantial attention \cite{BORGS20081801,borgs2011convergentAnnals,MarkovSpaces,KUNSZENTIKOVACS20191,frenkel2018convergence,backhausz2018action,veitch2015classrandomgraphsarising,janson2016graphons,caron2017sparse,borgs2018sparse,borgs2020identifiability,borgs2019sampling,10.1214/18-AOS1778,JANSON2022103549}. In addition, limits of weighted and edge-decorated graphs have been investigated in various contexts \cite{lovász2010limits,falgasravry2016multicolour,rath2011multigraph} and these limit objects have received increasing interest in recent years \cite{KUNSZENTIKOVACS2022109284,athreya2023pathconvergencemarkovchains,abraham2023probabilitygraphons,zucal2024probabilitygraphonsrightconvergence,zucal2024probabilitygraphonspvariablesequivalent}; see also the recent survey article \cite{ganguly2025multiplexonslimitsmultiplexnetworks}. 

For the classical theory for dense graph sequences \cite{BORGS20081801, LOVASZ2006933,borgs2011convergentAnnals} the limit objects are called graphons (from graph and function). 
A graphon is a (measurable) function $W$ from $[0,1]^2$ to $[0,1]$ and the convergence defined on these objects is given by the cut metric $\delta_{\square}$, see equation \eqref{eq:cutMetr}, that in turn is defined starting from the cut norm $\|\cdot\|_{\square}$, see equation \eqref{eq:def_NcutR}. Convergence in cut metric is also equivalent to the combinatorial point of view of the convergence of homomorphism densities \eqref{eq:homKernelformula}.

Recently, understanding higher-order interactions and the different phenomena caused by them has attracted  a lot of attention in physics and network science \cite{HigerOrdIntBook,majhi2022dynamics,MulasKuehnJost,bick_2023_higher}. To represent these interactions the natural combinatorial objects are hypergraphs. However, the development of analogous limit theories for hypergraphs is still very limited. Exceptions are the early works \cite{HypergraphonsZhao,hypergrELEK20121731,HypergraphsSzegedy2} and the recent works \cite{zucal2023action} and Section 9.4 in \cite{zucal2024probabilitygraphonspvariablesequivalent}. In this work, we develop further hypergraph limit theory for dense sequences, the limit objects in this case are called hypergraphons. For simplicity we focus in this introduction on the case of $3$-uniform hypergraphons, that are the limit objects for $3$-uniform hypergraphs. A $3$-uniform hypergraphon, is a (measurable) function $W$ from $[0,1]^6$ to $[0,1]$. In particular, for hypergraphons, differently from graphons, it is known that one has multiple possible choices for cut norms, capturing different combinatorial properties in the limit \cite{HypergraphonsZhao}. We understand further the combinatorial properties captured by the convergence in different cut norms, in the case of $3$-uniform hypergraphons the $1$-cut norm $\|\cdot\|_{\square,1}$, see equation \eqref{eq:one cut norm 2}, and the $2$-cut norm $\|\cdot\|_{\square,2}$, see equation \eqref{eq:CutNorm2_3unifhyp}. We consider hypergraphon contractions to graphons, that are directly related to contractions of adjacency tensors of hypergraphs to matrices related to some suitable underlying graph (Definition~\ref{def:Codegree2Sect} and Equation~\ref{HypergraphMatrixDifferent}). In particular, we understand which contractions of hypergraphons are continuous in cut norm $\|\cdot\|_{\square}$ with respect to which cut norm $\|\cdot\|_{\square,i}$ for $i=1$ or $2$ (Lemma~\ref{lemm:ContinuCodegreeGraphon} and Lemma~\ref{lemm:ContinuContrStrHypergraphon}). We also identify homomorphism densities of  hypergraphs with homomorphism densities of the related graphs obtained by the respective contraction of the adjacency tensor of the hypergraphs (Proposition~\ref{prop:subdivision counts} and Proposition~\ref{lemm:homDensHypIntersPatt}).  

As a key consequence of our results we obtain the pointwise convergence of the spectrum of several classical operators considered in spectral hypergraph theory under convergence in $1$-cut norm $\|\cdot\|_{\square,1}$.  Spectral graph theory \cite{chung1997spectral,brouwer2011spectra} studies properties of graphs in relation to the eigenvalues of matrices related to them and plays a key role in stochastic processes and combinatorics. Particularly important matrices considered in spectral graph theory are adjacency and Laplacian matrices. Graphons can be thought as natural continuum limits of adjacency matrices and Laplacian matrices for graphons also attracted considerable attention very recently \cite{RandomWalkGraphonLambiotte,LaplacianGraphonVizuete,klus2025learninggraphonsdatarandom}. Convergence of the spectrum of graphons has been studied in \cite{borgs2011convergentAnnals} and very recently in \cite{grebík2025convergencespectradigraphlimits} in the nonsymmetric case, see also \cite{hladký2025digraphonsconnectivityspectralaspects}. Another important application of spectral graph theory are dynamical systems on graphs. In this context, one is often faced with the standard dynamical question of linearisation around a steady state to determine stability. On the linearised level, the graph Laplacian is frequently appearing as an operator and one would like to determine what influence the graph structure has on stability. This concept was already discovered in the 1970s~\cite{SegelLevin,OthmerScriven}. Later on, this strategy became very popular in the onset of a wave of activity in complex network dynamics~\cite{PecoraCarroll} in the 1990s. 

Spectral graph theory is part of the broader framework of studying the spectra of more general discrete structures, such as hypergraphs \cite{JostMulasBook,MHJ,battiston_2021_physics,JOST2019870,MULAS202226,PhysRevE.101.022308,BANERJEE202182}. This area has seen an incredibly rapid development in recent years. Very recently, the same approach to network dynamics was also considered for dynamics on hypergraphs~\cite{MulasKuehnJost,carletti2020dynamical,MULAS202226}. Therefore, it is natural to try to understand spectra also for hypergraph limits.  However, this is the first work where the spectrum of limits of hypergraphs is systematically studied. In particular, we establish the convergence of the spectrum of adjacency and Laplacian matrices of hypergraph sequences (and hypergraphon sequences) under convergence in $1$-cut metric $\delta_{\square,1}$ (derived by the $1$-cut norm), Corollary~\ref{cor:ConvergenceSpectrHyp}. On the way, we establish also the pointwise convergence of the spectrum of the random walk Laplacian for graphons in cut distance, Theorem~\ref{ThmSpecLapGraph}, which has also not appeared in the literature before to the best of our knowledge.   We also establish that the convergence of the spectrum of less classical matrices for hypergraphs require convergence in $2$-cut norm instead of $1$-cut norm convergence.

\section{Graphs, graphons and Laplacians}\label{section:background}
A graph can be encoded via several matrix representations. We start by introducing perhaps the most well-known one. Let $G=(V,E)$ be a simple graph on $N$ vertices with vertex set $V$ and edge set $E$. The \emph{adjacency matrix} of $G$ is the matrix $A:=A(G)$ labelled by the vertices of $G$, and
\begin{equation*}
    A_{u v}= \begin{cases}1, & \text {if }\{u, v\} \in E, \\ 0, & \text {otherwise. }\end{cases}
\end{equation*}
In the following sections we also speak of edge-weighted graphs and their adjacency matrices.
For a weighted graph $G=(V,E,w)$, where $w:E\to\mathbb R_{> 0}$ is the weight function of the edges, the adjacency matrix $A:=A(G)$ has its entries defined as
\begin{equation*}
    A_{u v}= \begin{cases}w(u, v), & \text {if }\{u, v\} \in E, \\ 0, & \text {otherwise. }\end{cases}
\end{equation*}
We may write $G=(V(G),E(G))$ for clarity, if this is required by the context.

Applications involving graphs often analyse the properties of a Laplacian matrix. We restrict ourselves to the \emph{normalised Laplacian}, which is the matrix 
\begin{equation*}
    L=\mathrm{Id}-D^{-1 } A, 
\end{equation*}
where $\mathrm{Id}$ is the $N \times N$ identity matrix and $D$ is the diagonal matrix with the degree vector $(\deg v)_v$ on the diagonal. For a weighted graph, the degree of $v$ equals to the sum of the weights of all edges adjacent to $v$. The normalised Laplacian is also referred to as the \emph{random walk Laplacian} due to its following interpretation. For $u\neq v$, we have
\begin{equation*}
    -L_{u v}=\frac{A_{u v}}{\operatorname{deg} u},
\end{equation*}
which is the probability that a random walker on the vertices of $G$ moves from $u$ to $v$, where we assume the walker chooses the next vertex uniformly at random from the neighbours of its current state. We refer to \cite{chung1997spectral,MHJ} for more details on the normalised Laplacian and its spectral theory.

In order to compare graphs --- especially in the context of sequences of large dense graphs --- we make heavy use of the notion of graph homomorphisms. For two graphs $F$ and $G$, a \emph{graph homomorphism} from $F$ to $G$ is a function $\phi: V(F) \rightarrow V(G)$ such that for all $u,v\in V(F)$, if $\{u,v\}\in E(F)$, then $\{\phi(u),\phi(v)\}\in E(G)$. We denote by $\operatorname{hom}(F,G)$ the number of homomorphisms from $F$ to $G$. One easily verifies the following identity relating $\hom(F,G)$ to the adjacency matrix of $G$
\begin{equation}\label{eq:homGraphformula}
     \hom(F,G)=\sum_{\varphi: V(F)\rightarrow V(G)}\prod_{\{u,v\}\in E(F)}A_{\varphi(v)\varphi(w)}.
\end{equation}
\begin{example}\label{ExCycl}
It is well known that the $k$-th moment of the spectrum (spectral measure) of a graph’s adjacency matrix is equal to the number of closed walks of length $k$ in the graph, see for example \cite[Example 5.11]{LovaszGraphLimits}. This can be expressed in terms of homomorphism numbers. Let $C_k$ be a cycle graph of length $k$ and $A$ the adjacency matrix of a graph $G$. From equation \eqref{eq:homGraphformula} it follows that
\begin{equation*}
    \hom(C_k,G)= \operatorname{Tr}(A^k)=\sum^N_{i=1}\lambda_i^k
\end{equation*} 
where $\lambda_i$ are the $N$ eigenvalues of $A$.
\end{example}

Before moving on to graph limit theory, we introduce the following normalisation of $\operatorname{hom}(F,G)$.
For two graphs $F$ and $G$ the \emph{homomorphism density} $t(F, G)$ from $F$ to $G$ is
$$
t(F, G) = \frac{\hom(F, G)}{|V(G)|^{|V(F)|}},
$$
where we recall that $\hom(F, G)$ is the number of graph homomorphisms from $F$ to $G$. 

Homomorphism densities play a central role in the limit theory of dense graph limits. A sequence of graphs $(G_n)_n$ is said to be \emph{convergent} if the sequence of the homomorphism densities $(\hom(F, G_n))_n$ converges for each graph $F$. The natural limit objects to be considered for this type of convergence are graphons (from graph functions). For a more detailed background on the theory of dense graph limits we refer to  \cite{LovaszGraphLimits}. We briefly recall here the notion of a graphon. First, a \emph{kernel} is a bounded symmetric measurable function
\begin{equation*}
    W:[0,1]\times [0,1]\rightarrow \R.
\end{equation*}
A \emph{graphon} is a kernel
\begin{equation*}
    W:[0,1]\times [0,1]\rightarrow [0,1],
\end{equation*}
taking values in $[0,1]$. 

A graph $G$ on $N$ vertices with adjacency matrix $A$, can naturally be represented as a graphon $W_G$, defined as
\begin{equation*}
    W_G(x,y)=A_{\lceil Nx\rceil \lceil Ny\rceil}.
\end{equation*}

The definition of homomorphism densities for graphs extends naturally to homomorphism densities in kernels as well. Given a finite graph $F$ and a kernel $W$, define the homomorphism density $t(F, W)$ as

\begin{equation}\label{eq:homKernelformula}
    t(F, W) = \int_{[0,1]^{|V(F)|}} \prod_{\{u,v\}\in E(F)} W(x_u, x_v) \, \prod_{v\in V(F)}\mathrm{d} x_v.
\end{equation}
Similarly to graphs, we say that a sequence of kernels $(W_n)_n$ is convergent if the sequence $(t(F,W_n))_n$ converges for every graph $F$. Kernels receive their name from the fact that any such square-integrable kernel $W$ induces an operator $\mathcal{A}_W: L^2([0,1])\to  L^2([0,1])$, defined by
\begin{equation}\label{eq:kernel operator}
\begin{aligned}
    (\mathcal{A}_W f)(x)&=\int^1_0W(x,y)f(y)\,\mathrm{d}y,
    \end{aligned}
\end{equation}
where $L^2([0,1])$  denotes the space of real-valued square-integrable functions. For our purposes, this space is enough as we focus on self-adjoint operators. However, in more generality one could consider the space of  complex-valued square-integrable functions.

This gives the following analogy with Example~\ref{ExCycl}, identifying a graphon as a continuous extension of an adjacency operator.
\begin{example}\label{ExHomDensCycl}
Let $C_k$ be a cycle graph of length $k$ and $W$ any graphon. From Equation \eqref{eq:homKernelformula} it follows that \begin{equation*}
    t(C_k,W)=\sum_{\lambda \in \text{spec}(\mathcal{A}_{W})}\lambda^k,
\end{equation*} 
where $\text{spec}(\mathcal{A}_{W})$ denotes the spectrum of $\mathcal{A}_W$.\end{example}

We denote by $\|\cdot\|_{\square}$ the \emph{cut norm} defined as:
\begin{equation}
	\label{eq:def_NcutR}
	\|W\|_{\square} = \sup_{f,g} \bigg| \int_{[0,1]^2} W(x,y)f(x)g(y)\, \mathrm{d}x\, \mathrm{d}y \bigg|,
\end{equation}
for any kernel $W$, where the supremum is taken over measurable functions $f,g:[0,1]\to[0,1]$. The \emph{cut metric} is derived from this norm and is defined for two kernels $U$ and $W$ as 
\begin{equation}\label{eq:cutMetr}
    \delta_{\square}(U,W)=\inf_{\varphi}\|U-W^{\varphi}\|_{\square}.
\end{equation}
Here the infimum is over all measure preserving maps $\varphi:[0,1]\to[0,1]$, with $W^\varphi$ defined as $W^\varphi(x,y)\coloneq W(\varphi(x),\varphi(y))$, which results in $\delta_\square$ being only a pseudometric. We say that the graphons $U$ and $W$ are \emph{equivalent} if $\delta_\square(U,W)=0$. The following well-known theorems (see for example \cite{LovaszGraphLimits}) give us the compactness of the space of graphons (up to equivalence) equipped with the cut metric, and the equivalence between the two modes of convergence for graphons, see \cite[Theorem 9.23]{LovaszGraphLimits} and \cite[Theorem 11.5]{LovaszGraphLimits} respectively.
\begin{theorem}\label{thm:graphon compact}
    Any sequence of graphons $(W_n)_n$ admits a convergent subsequence with respect to $\delta_{\square}$.
\end{theorem}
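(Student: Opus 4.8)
The plan is to deduce the statement from the \emph{weak regularity lemma} for graphons together with a diagonal compactness argument. Recall that the weak regularity lemma provides a function $q:\N\to\N$ such that for every graphon $W$ and every $k\ge 1$ there is a partition $\cP$ of $[0,1]$ into at most $q(k)$ measurable classes with $\|W-W_{\cP}\|_{\square}\le 1/k$, where $W_{\cP}$ is the step function obtained from $W$ by averaging over the product blocks $P\times P'$ of $\cP\times\cP$; moreover one can arrange that the partition at level $k+1$ refines the one at level $k$. (A self-contained proof of this lemma is the standard energy-increment argument in $L^2([0,1]^2)$.)

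First I would apply this to each $W_n$, obtaining for every $k$ a partition $\cP_{n,k}$ of $[0,1]$ into $m_k:=q(k)$ classes (padding with empty classes so that the number of classes depends only on $k$), nested in $k$, with $\|W_n-(W_n)_{\cP_{n,k}}\|_{\square}\le 1/k$. Since $\delta_{\square}$ is invariant under the action of measure preserving maps, I may replace each $W_n$ by an equivalent graphon for which every $\cP_{n,k}$ consists of consecutive subintervals of $[0,1]$, nestedness being preserved. Each step function $(W_n)_{\cP_{n,k}}$ is then described by the $m_k$ interval endpoints of $\cP_{n,k}$ and the $m_k^2$ values it takes on product blocks, all lying in $[0,1]$; by a diagonal argument I pass to a subsequence along which, for every $k$ simultaneously, these endpoints and values converge as $n\to\infty$, and I let $U_k$ be the step function on the resulting limiting interval partition $\cP_{\infty,k}$ carrying the limiting values. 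Then $(W_n)_{\cP_{n,k}}\to U_k$ in $L^1$ as $n\to\infty$, each $U_k$ is a graphon, and $\cP_{\infty,k+1}$ refines $\cP_{\infty,k}$ up to a null set. The sequence $(U_k)_k$ is a martingale, bounded by $1$, with respect to the filtration generated by the $\cP_{\infty,k}\times\cP_{\infty,k}$: the identity $\mathbb{E}\!\left[(W_n)_{\cP_{n,k+1}}\mid \sigma(\cP_{n,k}\times\cP_{n,k})\right]=(W_n)_{\cP_{n,k}}$ holds for each $n$, and passing to the limit block by block (each block indicator converging in $L^1$) yields $\mathbb{E}\!\left[U_{k+1}\mid \sigma(\cP_{\infty,k}\times\cP_{\infty,k})\right]=U_k$. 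By the martingale convergence theorem $U_k\to U$ in $L^1$ for some graphon $U$ (the limit is symmetric and $[0,1]$-valued, being an $L^1$-limit of such functions).

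It remains to check $\delta_{\square}(W_n,U)\to0$ along the subsequence. For every $k$,
\[
\delta_{\square}(W_n,U)\le\|W_n-U\|_{\square}\le\|W_n-(W_n)_{\cP_{n,k}}\|_{\square}+\|(W_n)_{\cP_{n,k}}-U_k\|_{1}+\|U_k-U\|_{1},
\]
using $\|\cdot\|_{\square}\le\|\cdot\|_{1}$ on the last two terms. Given $\varepsilon>0$, fix $k$ with $1/k<\varepsilon/3$ and $\|U_k-U\|_{1}<\varepsilon/3$, then take $n$ large enough that $\|(W_n)_{\cP_{n,k}}-U_k\|_{1}<\varepsilon/3$ (possible since these two step functions have converging endpoints and values), so $\delta_{\square}(W_n,U)<\varepsilon$. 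Thus the original sequence admits a subsequence converging to $U$ in $\delta_{\square}$. I expect the main obstacle to be the weak regularity lemma itself, and — granting it — the bookkeeping required to make the limiting partitions into genuinely nested interval partitions and to identify $(U_k)_k$ as a martingale; once these are secured, the compactness follows from the routine diagonal extraction above.
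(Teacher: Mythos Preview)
The paper does not give its own proof of this theorem: it is stated as a known result with a reference to \cite[Theorem 9.23]{LovaszGraphLimits}. Your argument is essentially the standard proof found there---weak regularity to approximate each $W_n$ by step functions with a controlled number of classes, a diagonal extraction on the finitely many parameters (class sizes and block values) describing these step functions, and the martingale convergence theorem to produce the limit graphon $U$---so there is nothing to compare. The proof is correct; the only places requiring a little care are (i) the existence of a single measure-preserving map $\varphi_n$ turning the entire nested tower $(\cP_{n,k})_k$ into nested interval partitions, which follows by iteratively ordering the sub-classes level by level, and (ii) the passage to the limit in the conditional-expectation identity, which you handle correctly by noting that both the step functions and the block indicators converge in $L^1$ and are uniformly bounded.
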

\begin{theorem}\label{thm:graphon conv equiv}
A sequence of graphons $(W_n)_n$ is convergent with respect to $\delta_{\square}$ if and only if the sequence of homomorphism densities $(t(F,W_n))_n$ converges for every graph $F$.
\end{theorem}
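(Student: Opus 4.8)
The plan is to prove the two implications separately: the forward one is the elementary \emph{counting lemma}, while the converse combines it with the compactness of Theorem~\ref{thm:graphon compact} and the fact that homomorphism densities determine a graphon up to the equivalence $\delta_\square(\cdot,\cdot)=0$. The counting lemma I would prove is that $t(F,\cdot)$ is Lipschitz in the cut metric: for all graphons $U,W$ and every finite graph $F$,
\[
  |t(F,U)-t(F,W)|\le |E(F)|\,\delta_\square(U,W).
\]
Granting this, if $W_n\to W$ in $\delta_\square$ then $t(F,W_n)\to t(F,W)$ for each fixed $F$, which is exactly the asserted convergence of homomorphism densities (with the limit identified as $t(F,W)$), proving the ``only if'' direction.

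To establish the counting lemma I would first note that $t(F,\cdot)$ is invariant under measure-preserving maps, $t(F,W^\varphi)=t(F,W)$ by the change of variables in \eqref{eq:homKernelformula}, so it suffices to show $|t(F,U)-t(F,W)|\le |E(F)|\,\|U-W\|_\square$ and then pass to the infimum over $\varphi$ in \eqref{eq:cutMetr}. Enumerating $E(F)=\{e_1,\dots,e_m\}$, I telescope through the hybrid integrals $H_j$ obtained from \eqref{eq:homKernelformula} by using the factor $W(x_u,x_v)$ for the edges $e_i=\{u,v\}$ with $i\le j$ and the factor $U(x_u,x_v)$ for $i>j$, so that $H_0=t(F,U)$ and $H_m=t(F,W)$. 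For fixed $j$, write $e_j=\{u,v\}$ and freeze all coordinates except $x_u,x_v$; since $F$ is simple there is no edge parallel to $e_j$, so the integrand of $H_j-H_{j-1}$ factorises as $(W-U)(x_u,x_v)\,f(x_u)\,g(x_v)$ with $f,g$ (depending on the frozen coordinates) taking values in $[0,1]$, whose integral over $x_u,x_v$ is at most $\|W-U\|_\square$ by \eqref{eq:def_NcutR}. Integrating over the remaining coordinates, a domain of total mass $1$, preserves the bound, and summing over $j$ gives $|t(F,U)-t(F,W)|\le m\,\|U-W\|_\square$.

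For the converse, suppose $(t(F,W_n))_n$ converges for every $F$ and set $\tau(F)=\lim_n t(F,W_n)$. By Theorem~\ref{thm:graphon compact} some subsequence satisfies $W_{n_k}\to U$ in $\delta_\square$, and the counting lemma gives $t(F,U)=\lim_k t(F,W_{n_k})=\tau(F)$ for every $F$. I claim $W_n\to U$ in $\delta_\square$. If not, there would be $\varepsilon>0$ and a subsequence with $\delta_\square(W_{m_j},U)\ge\varepsilon$; applying Theorem~\ref{thm:graphon compact} again extracts a further subsequence converging in $\delta_\square$ to some graphon $W'$ with $\delta_\square(W',U)\ge\varepsilon>0$, while the counting lemma yields $t(F,W')=\tau(F)=t(F,U)$ for all $F$. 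So it remains only to show that two graphons with the same homomorphism density for every finite graph satisfy $\delta_\square(U,W')=0$ --- contradicting $\delta_\square(W',U)>0$ --- which gives $W_n\to U$ and closes the argument.

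This last statement is the genuine obstacle, the other two ingredients being soft (the counting lemma is the telescoping above, and the ``subsequence of a subsequence'' packaging is routine once compactness is available). I would obtain it from the weak (Frieze--Kannan) regularity lemma together with a limiting argument: given $\varepsilon>0$, approximate $U$ and $W$ within $\varepsilon$ in cut norm by step functions with a bounded number $q=q(\varepsilon)$ of steps, observe that all subgraph densities $t(F,\cdot)$ with $|V(F)|\le q$ agree up to $O_q(\varepsilon)$ for the two step functions by the counting lemma, use a compactness argument over measure-preserving rearrangements (or fractional partitions) to align their steps so that the step functions become $O_q(\varepsilon)$-close in cut norm, and let $\varepsilon\to0$. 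Alternatively, one may simply invoke the classical uniqueness theorem for dense graph limits, as is done in \cite{LovaszGraphLimits}.
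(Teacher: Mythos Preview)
Your proposal is correct and follows the standard route (Counting Lemma for the forward direction; compactness plus the uniqueness/Inverse Counting Lemma for the converse). Note, however, that the paper does not supply its own proof of this theorem: it is stated as a well-known background result with a reference to \cite[Theorem~11.5]{LovaszGraphLimits}, so there is no in-paper argument to compare against---your sketch is essentially the proof one finds in that reference.
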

We consider in later sections the limit properties of Laplace operators, which need not be symmetric. For this reason, we consider measurable functions $W:[0,1]^2\to[0,1]$, which are not necessarily symmetric. We refer to such a function as a \textit{$[0,1]$-valued kernel function}. In this paper, in the directed case, we require only the implication from convergence in cut distance to convergence of homomorphism densities, and no further theory will be needed. The definitions in this section concerning convergent graph sequences and graphons extends easily to the directed case. For this, one considers ordered pairs $(u,v)\in E(F)$ instead of unordered pairs, as in formulas \eqref{eq:homGraphformula} and \eqref{eq:homKernelformula}, and the test graphs $F$ in these formulas must be directed graphs without loops or more than one directed edge between two vertices. For such a directed graph $F$, and a $[0,1]$-valued kernel function $W$, the homomorphism density $t(F,W)$ can be defined in a similarly to \eqref{eq:homKernelformula}. As a result, the Counting Lemma for Graphons \cite[Lemma 10.23]{lovász2010limits} extends easily in the following way.
\begin{proposition}\label{prop:counting lemma}
Let $U,W:[0,1]^2\to[0,1]$ be two (not necessarily symmetric) measurable functions. Let $F$ be a directed graph without loops or more than one directed edge between two vertices. Then
    \begin{equation*}
        \lvert t(F,U)-t(F,W)\rvert\leq \lvert E(F)\rvert\delta_\square(U,W).
    \end{equation*}
\end{proposition}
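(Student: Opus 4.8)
The plan is to reduce the directed case to the well-known symmetric Counting Lemma for graphons \cite[Lemma 10.23]{lovász2010limits} by a standard telescoping argument, which works verbatim in the directed setting since symmetry of $W$ is never actually used in the proof of the counting inequality --- only measurability and the $[0,1]$-bound. So the first step is to observe that $\delta_\square$ for $[0,1]$-valued kernel functions is defined exactly as in \eqref{eq:cutMetr}, with the infimum over measure-preserving maps $\varphi$ and $(U-W^\varphi)(x,y) = U(x,y) - W(\varphi(x),\varphi(y))$; moreover $t(F,W^\varphi) = t(F,W)$ for every directed test graph $F$, since a change of variables by the measure-preserving map $\varphi$ in the integral \eqref{eq:homKernelformula} leaves the homomorphism density unchanged. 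Hence it suffices to prove the pointwise estimate
\begin{equation*}
    \lvert t(F,U) - t(F,W) \rvert \le \lvert E(F) \rvert \, \lVert U - W \rVert_\square
\end{equation*}
for all $[0,1]$-valued kernel functions $U,W$ and all loopless simple directed graphs $F$, and then take the infimum over $\varphi$ on the right-hand side (applied to $U$ and $W^\varphi$).

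Next I would set up the telescoping. Enumerate the directed edges of $F$ as $e_1, \dots, e_m$ with $m = \lvert E(F)\rvert$, and for $0 \le j \le m$ define the mixed density
\begin{equation*}
    I_j = \int_{[0,1]^{\lvert V(F)\rvert}} \prod_{i \le j} U(x_{e_i}) \prod_{i > j} W(x_{e_i}) \, \prod_{v \in V(F)} \mathrm{d}x_v,
\end{equation*}
where for a directed edge $e = (u,v)$ I write $U(x_e) = U(x_u,x_v)$. Then $I_0 = t(F,W)$, $I_m = t(F,U)$, and $t(F,U) - t(F,W) = \sum_{j=1}^m (I_j - I_{j-1})$, so it is enough to bound each difference $\lvert I_j - I_{j-1}\rvert$ by $\lVert U - W\rVert_\square$. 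The difference $I_j - I_{j-1}$ is the integral of $\bigl(U(x_{e_j}) - W(x_{e_j})\bigr)$ times the product of the remaining factors, each of which lies in $[0,1]$.

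The key step is then to bound a single such term. Write $e_j = (u,v)$ and integrate out all variables $x_w$ with $w \notin \{u,v\}$ first; this produces a function $h(x_u, x_v)$ that is the integral over the other coordinates of $\prod_{i<j} U(x_{e_i}) \prod_{i>j} W(x_{e_i})$, which takes values in $[0,1]$ since each factor does and integration is over a probability space. Thus
\begin{equation*}
    I_j - I_{j-1} = \int_{[0,1]^2} \bigl(U(x_u,x_v) - W(x_u,x_v)\bigr) \, h(x_u,x_v) \, \mathrm{d}x_u \, \mathrm{d}x_v.
\end{equation*}
Since $h$ takes values in $[0,1]$, this is not yet directly the cut norm --- the cut norm as defined in \eqref{eq:def_NcutR} uses a \emph{product} $f(x)g(y)$ of two one-variable functions rather than a single two-variable function $h(x,y)$. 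The cleanest fix is to invoke the standard fact that the cut norm is, up to a constant factor of at most $1$ in this normalisation, equivalent to the supremum over $[0,1]^2$-measurable sets, i.e.\ $\sup_{S,T}\lvert \int_{S\times T}(U-W)\rvert$, and that $\sup_h \lvert\int (U-W)h\rvert$ over $h:[0,1]^2\to[0,1]$ equals $\sup_h\lvert\int(U-W)h\rvert$ over $h:[0,1]^2\to[-1,1]$ up to the same constant, which in turn equals $\sup_{S,T}$ by a two-step extreme-point / Fubini argument (fix $x$, optimise over $y$ to get an indicator in $y$ depending measurably on $x$, then optimise over $x$). This identifies $\sup_h\lvert\int(U-W)h\rvert$ with $\lVert U-W\rVert_\square$ (with the convention of \eqref{eq:def_NcutR} the two agree). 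The main obstacle is exactly this last point: making sure the definition of the cut norm used here (supremum over products $f(x)g(y)$ with $f,g\in[0,1]$) coincides with, or dominates, the supremum over a single $h(x,y)\in[0,1]$ arising in the telescoping. This is classical (see \cite[Lemma 8.10]{LovaszGraphLimits} and surrounding discussion) and does not introduce any extra constant for the $[0,1]\times[0,1]$-test-function normalisation; once this is in hand, combining $\lvert I_j - I_{j-1}\rvert \le \lVert U-W\rVert_\square$ for each $j$, summing over the $m$ edges, and taking the infimum over measure-preserving $\varphi$ yields the claimed bound $\lvert t(F,U) - t(F,W)\rvert \le \lvert E(F)\rvert\,\delta_\square(U,W)$.
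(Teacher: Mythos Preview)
Your overall strategy --- reduce from $\delta_\square$ to the cut-norm inequality via $t(F,W^\varphi)=t(F,W)$, then telescope over the edges --- is exactly the standard argument the paper invokes. The error is in the key step. After integrating out all variables $x_w$ with $w\notin\{u,v\}$ you obtain a two-variable function $h(x_u,x_v)\in[0,1]$ and then claim that $\sup_h\bigl|\int(U-W)h\bigr|$ over such $h$ coincides with $\lVert U-W\rVert_\square$. This is false: the supremum over \emph{arbitrary} $h:[0,1]^2\to[0,1]$ is (up to a factor) the $L^1$ norm of $U-W$, which can be far larger than the cut norm. Your ``two-step Fubini'' argument --- fix $x$, optimise over $y$, then optimise over $x$ --- yields an indicator $\ind_{\{y\in T(x)\}}$ where $T(x)$ \emph{depends on $x$}, hence a general measurable subset of $[0,1]^2$, not a rectangle $S\times T$. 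The lemma you cite from \cite{LovaszGraphLimits} only compares various \emph{product-type} test functions and does not bridge this gap.

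The correct fix is not to integrate out the other variables first. For fixed values of $(x_w)_{w\neq u,v}$, the product $\prod_{i\neq j}$ of the remaining edge factors is, by the hypothesis that $e_j$ is the \emph{only} edge of $F$ joining $u$ and $v$, a product of factors each involving at most one of $x_u$, $x_v$; hence it splits as $f(x_u)g(x_v)$ for $[0,1]$-valued $f,g$ depending measurably on the frozen variables. Thus for each fixed choice of the other coordinates one has
\[
\Bigl|\int_{[0,1]^2}(U-W)(x_u,x_v)\,f(x_u)g(x_v)\,\mathrm{d}x_u\,\mathrm{d}x_v\Bigr|\le\lVert U-W\rVert_\square,
\]
and integrating this bound over the remaining probability space gives $|I_j-I_{j-1}|\le\lVert U-W\rVert_\square$. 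This is precisely where the no-parallel-edges assumption is used, as the paper remarks.
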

\begin{proof}
  The proof in the symmetric case relies only on the absence of parallel (multiple) edges, and not on symmetry, see \cite[Lemma 10.23]{LovaszGraphLimits} or \cite[Theorem 4.5.1]{Zhao_2023}. Hence, the statement follows immediately.
\end{proof}

\subsection{Laplacian for graphons and its convergence}\label{section:laplacian convergence}
We prove here some convergence properties of the random walk kernel and random walk Laplacian for graphons that have been introduced (with slightly different conventions) in \cite{RandomWalkGraphonLambiotte}.

Let $W: [0,1]^2 \to [0,1]$ be a graphon. Define the degree function $d_W: [0,1] \to [0,1]$ by
\begin{equation*}
    d_W(x) = \int_0^1 W(x,y) \, \mathrm{d}y.
\end{equation*}
The \emph{random walk kernel} of a graphon $W$ is the function $K_W: [0,1]^2 \to \mathbb{R}$
\begin{equation*}
K_W(x,y) =\begin{cases}
      \frac{W(x,y)}{d_W(x)} \quad \text{if } d_W(x) > 0\\
     0 \qquad  \quad \text{if } d_W(x) = 0.
\end{cases}
\end{equation*}

It will be in many cases convenient to consider the following condition for graphons.

\begin{assumption}\label{AssumptionDeg}
 For the graphon $W$ there exists an $\varepsilon>0$ such that $d_{W}>\varepsilon$ almost everywhere.
\end{assumption}

\begin{remark}\label{RkRWkerngraphon}
If a graphon $W$ satisfies Assumption~\ref{AssumptionDeg} for a certain $\varepsilon>0$, then the kernel $\varepsilon K_W$ is a $[0,1]$-valued kernel function.
\end{remark}

Graphons whose degree functions are bounded from below almost everywhere exhibit a nice property, which is demonstrated in the following statement.
\begin{lemma}\label{lem:continuity graphon laplacian}
Let $W$ and $U$ be two graphons both satisfying Assumption~\ref{AssumptionDeg} for the same $\varepsilon>0$. Then the following inequality holds:
\begin{equation*}
     \|K_W-K_U\|_{\square}\leq \frac{2}{\varepsilon}\|W-U\|_{\square}.
\end{equation*}
\end{lemma}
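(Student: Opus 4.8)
The plan is to write $K_W - K_U$ as a sum of two pieces, one controlling the change in numerator and one controlling the change in denominator, and to bound each in cut norm. Write, on the set where both $d_W$ and $d_U$ are positive (which up to a null set is all of $[0,1]$ by Assumption~\ref{AssumptionDeg}),
\begin{equation*}
K_W(x,y) - K_U(x,y) = \frac{W(x,y)-U(x,y)}{d_W(x)} + U(x,y)\left(\frac{1}{d_W(x)}-\frac{1}{d_U(x)}\right).
\end{equation*}
The first term is $\big(W(x,y)-U(x,y)\big)\,h(x)$ for the measurable function $h(x) = 1/d_W(x) \in (0,1/\varepsilon]$; the second term is $U(x,y)\,g(x)$ where $g(x) = \big(d_U(x)-d_W(x)\big)/\big(d_W(x)d_U(x)\big)$ satisfies $|g(x)| \le \varepsilon^{-2}|d_U(x)-d_W(x)| \le \varepsilon^{-2}\|W-U\|_\square$ (the last inequality because $|d_U(x)-d_W(x)| = \big|\int_0^1 (U(x,y)-W(x,y))\,\mathrm{d}y\big|$ is pointwise bounded by $\|W-U\|_\square$, testing with $f$ the indicator of a point — more carefully, $\sup_x|d_U(x)-d_W(x)|$ need not be attained, but for any measurable $S$, $\int_S |d_U-d_W|$ has the right sign structure, so one uses the decomposition $S=S^+\cup S^-$ trick below).

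The key technical point is the estimate $\|V h\|_\square \le C\|V\|_\square$ when $h:[0,1]\to[0,1]$ (or $h$ bounded by $C$) is a multiplier acting on the first coordinate and $V$ is a kernel. This follows from the definition \eqref{eq:def_NcutR}: for test functions $f,g:[0,1]\to[0,1]$, $\int V(x,y)h(x)f(x)g(y)\,\mathrm{d}x\,\mathrm{d}y$; when $h \le 1/\varepsilon$ one rescales, and more generally a bounded measurable multiplier in one variable only changes the cut norm by the sup-bound of the multiplier — this is standard but I would state it as a small lemma, handling the sign of $h$ by splitting into positive and negative parts if needed (here $h>0$, so it is immediate after rescaling by $\varepsilon$). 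Thus the first term contributes at most $\tfrac{1}{\varepsilon}\|W-U\|_\square$. For the second term, $\|U g\|_\square \le \|g\|_\infty \le \varepsilon^{-2}\|W-U\|_\square$ using $0 \le U \le 1$ — but wait, this gives $\varepsilon^{-2}$, not $\tfrac{1}{\varepsilon}$; the sharper route is to note $U(x,y)g(x)$ with $\|g\|_\infty$ small, and actually $\|U g\|_\square \le \|g\|_\infty$ directly since $|U|\le 1$, and then $\|g\|_\infty \le \varepsilon^{-2}\|W-U\|_\square \le \varepsilon^{-1}\cdot(\varepsilon^{-1}\|W-U\|_\square)$. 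To land exactly at $\tfrac{2}{\varepsilon}$, the trick is that one should not bound $|d_U - d_W|$ by $\|W-U\|_\square$ but keep it as a function and use $\int_S |d_U(x)-d_W(x)|\,\mathrm{d}x \le 2\|W-U\|_\square$ for every measurable $S$ (split $S$ according to the sign of $d_U-d_W$, apply the cut-norm bound with $g\equiv 1$ on each part). Then the second term becomes $U(x,y)g(x)$ with $|g(x)| \le \varepsilon^{-2}|d_U(x)-d_W(x)|$ and one integrates: $\big|\int U(x,y)g(x)f(x)g_2(y)\,\mathrm{d}x\,\mathrm{d}y\big| \le \varepsilon^{-2}\int |d_U(x)-d_W(x)|\,\mathrm{d}x \cdot \sup|U| \le \varepsilon^{-2}\cdot 2\|W-U\|_\square \cdot$ — hmm, this gives $2\varepsilon^{-2}$.

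Let me reconsider: the clean argument is that $\varepsilon d_W \ge \varepsilon$, $d_W \le 1$, so $1/d_W \le 1/\varepsilon$, and we factor out $1/\varepsilon$ from the first term; for the second term write $\tfrac{1}{d_W} - \tfrac{1}{d_U} = \tfrac{d_U - d_W}{d_W d_U}$ and bound $\tfrac{1}{d_W d_U} \le \tfrac{1}{\varepsilon}\cdot\tfrac{1}{\varepsilon}$; but since also $d_U \ge \varepsilon$ and $d_U \le 1$ we can instead only extract one factor $\tfrac{1}{\varepsilon}$ and keep $\tfrac{d_U-d_W}{d_U}$ whose integral against any one-variable test function is controlled: $\int_S \tfrac{|d_U - d_W|}{d_U}\,\mathrm{d}x$... this still needs $d_U$ in the denominator. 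The honest statement is likely $\|K_W - K_U\|_\square \le \tfrac{1}{\varepsilon}\|W-U\|_\square + \tfrac{1}{\varepsilon^2}\cdot 2\|W-U\|_\square$ or similar; so I would prove $\|K_W - K_U\|_\square \le \tfrac{2}{\varepsilon}\|W-U\|_\square$ by using $d_W, d_U \in [\varepsilon, 1]$, hence $d_W d_U \ge \varepsilon d_U$ gives $\tfrac{1}{d_W d_U} \le \tfrac{1}{\varepsilon d_U}$, and then $U(x,y)\cdot\tfrac{d_U(x)-d_W(x)}{\varepsilon d_U(x)}$: since $U(x,y) \le 1$ and $\tfrac{1}{d_U(x)} \le$ — no. I expect the main obstacle is exactly this bookkeeping to get the constant $2/\varepsilon$ rather than $2/\varepsilon^2$; the resolution is the multiplier lemma applied twice (once in each of $1/d_W$ and in a telescoped form), i.e. write $K_W - K_U = \tfrac{1}{d_W}(W - U) + \tfrac{1}{d_W}(U - \tfrac{d_W}{d_U}U)$ and recognize $U - \tfrac{d_W}{d_U}U$ has cut norm $\le \|d_U - d_W\|_\infty / \varepsilon$... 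The cleanest path, which I would adopt: apply the one-variable multiplier bound $\|V\cdot h\|_\square \le \|h\|_\infty \|V\|_\square$ with $h = 1/d_W$ to get $\le \tfrac{1}{\varepsilon}\|W - \tfrac{d_W}{d_U}U\|_\square$, then $\|W - \tfrac{d_W}{d_U}U\|_\square \le \|W - U\|_\square + \|(1 - \tfrac{d_W}{d_U})U\|_\square \le \|W-U\|_\square + \|1 - \tfrac{d_W}{d_U}\|_\infty$, and finally $|1 - d_W(x)/d_U(x)| = |d_U(x)-d_W(x)|/d_U(x) \le \tfrac{1}{\varepsilon}\sup_x|d_U(x) - d_W(x)| \le \tfrac{1}{\varepsilon}\|W-U\|_\square$ — which again yields $\tfrac{1}{\varepsilon}(1 + \tfrac{1}{\varepsilon})\|W-U\|_\square$. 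Since for $\varepsilon \le 1$ this is $\le \tfrac{2}{\varepsilon^2}\|W-U\|_\square$, I suspect the paper's $\tfrac{2}{\varepsilon}$ uses a subtler observation (e.g. that one never needs $1/d_U$, only $1/d_W$, because the telescoping can be arranged so the "bad" denominator cancels); I would present the decomposition above and then optimize the constant, flagging that the factor is $2/\varepsilon$ under the stated normalization $d_W \le 1$ by keeping $d_W$ (not $\varepsilon$) in an intermediate denominator and using $\int_S |d_U - d_W| \le 2\|W-U\|_\square$.
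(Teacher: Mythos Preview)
Your decomposition is the same as the paper's, and your treatment of the first term $\tfrac{W-U}{d_W}$ is correct and matches the paper's. The gap is entirely in the second term, where you repeatedly land at $C/\varepsilon^2$ and never reach $1/\varepsilon$. The missing observation is this: in
\[
\int_{[0,1]^2}\frac{d_U(x)-d_W(x)}{d_W(x)d_U(x)}\,U(x,y)\,f(x)g(y)\,\mathrm{d}x\,\mathrm{d}y
= \int_0^1 \frac{d_U(x)-d_W(x)}{d_W(x)d_U(x)}\,f(x)\Bigl(\int_0^1 U(x,y)g(y)\,\mathrm{d}y\Bigr)\mathrm{d}x,
\]
the inner integral satisfies $\int_0^1 U(x,y)g(y)\,\mathrm{d}y \le d_U(x)$ because $0\le g\le 1$. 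Hence the factor $d_U(x)$ in the denominator is cancelled, and
\[
h(x)\;:=\;\frac{\varepsilon}{d_W(x)d_U(x)}\,f(x)\int_0^1 U(x,y)g(y)\,\mathrm{d}y
\]
is a legitimate $[0,1]$-valued test function of $x$ alone. What remains is $\tfrac{1}{\varepsilon}\int_0^1 (d_U(x)-d_W(x))\,h(x)\,\mathrm{d}x$, which is bounded by $\tfrac{1}{\varepsilon}\|W-U\|_\square$ (take the second test function in the cut norm to be identically~$1$). Adding the two pieces gives exactly $\tfrac{2}{\varepsilon}\|W-U\|_\square$.

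Two side remarks on your attempts. First, the bound $\sup_x|d_U(x)-d_W(x)|\le \|W-U\|_\square$ that you invoke (and then half-retract) is false in general; the cut norm controls integrals of $d_U-d_W$ against bounded test functions, not its pointwise values. Second, your instinct that ``the bad denominator cancels'' was exactly right --- it is $d_U$ that cancels against $\int U(\cdot,y)g(y)\,\mathrm{d}y$, not via any telescoping rearrangement.
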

\begin{proof}
    Write
    \begin{equation*}
        K_W(x,y)-K_U(x,y) = \frac{W(x,y)-U(x,y)}{d_W(x)} + \bigg(\frac{1}{d_W(x)} - \frac{1}{d_U(x)}\bigg)U(x,y),
    \end{equation*}
    then as $f(x)\varepsilon / d_W(x)$ maps into $[0,1]$, we have for the first term
    \begin{equation*}
    \begin{aligned}
        \sup_{f,g}\bigg|\int_{[0,1]^2}\frac{W(x,y)-U(x,y)}{d_W(x)}f(x)g(y)\,\mathrm{d}x\,\mathrm{d}y\bigg|&\leq\frac{1}{\varepsilon}\sup_{h,g}\bigg|\int_{[0,1]^2}W(x,y)-U(x,y)h(x)g(y)\,\mathrm{d}x\,\mathrm{d}y\bigg|\\
        \leq \frac{1}{\varepsilon}\|W-U\|_\square.
    \end{aligned}
    \end{equation*}
    For the second term we have 
    \begin{equation*}
        \int_{[0,1]^2}\bigg(\frac{1}{d_W(x)} - \frac{1}{d_U(x)}\bigg)U(x,y)f(x)g(y)\,\mathrm{d}x\,\mathrm{d}y = \int_0^1\frac{d_U(x)-d_W(x)}{d_W(x)d_U(x)}f(x)\int_0^1U(x,y)g(y)\,\mathrm{d}y\,\mathrm{d}x,
    \end{equation*}
    and since $\int U(x,y)g(y)\,\mathrm{d}y\leq d_U(x)$, we have that now $\frac{\varepsilon}{d_W(x)d_U(x)}f(x)\int U(x,y)g(y)\,\mathrm{d}y$ is a function of $x$ mapping into $[0,1]$. Thus we find
    \begin{equation*}
    \begin{aligned}
        \sup_{f,g}\bigg|\int_{[0,1]^2}\bigg(\frac{1}{d_W(x)} - \frac{1}{d_U(x)}\bigg)U(x,y)f(x)g(y)\,\mathrm{d}x\,\mathrm{d}y\bigg| &\leq\frac{1}{\varepsilon}\sup_{h}\bigg|\int_{[0,1]^2}(d_U(x)-d_W(x))h(x)\,\mathrm{d}x\,\mathrm{d}y\bigg|\\
        &\leq \frac{1}{\varepsilon}\|W-U\|_\square.
    \end{aligned}
    \end{equation*}
    This finishes the proof.
\end{proof}
Let $W$ be a graphon such that the random walk kernel $K_W$ of $W$ is square-integrable. We call the operator $\mathcal{K}_W:L^2([0,1])\to  L^2([0,1])$ induced by the random walk kernel the \emph{random walk kernel operator} and it is defined by $\mathcal{K}_W:=\mathcal{A}_{K_W}$, as in \eqref{eq:kernel operator}. The \emph{random walk Laplacian operator} $\mathcal{L}_W  :    L^2([0,1])\to  L^2([0,1])$ of $W$ is defined by
\begin{equation*}
\begin{aligned}
    (\mathcal{L}_W )f(x)&=f(x)- (\mathcal{K}_W )f(x).
    \end{aligned}
\end{equation*}
\begin{remark}\label{remarkEpsRW}
Let $W$ such that Assumption~\ref{AssumptionDeg} is satisfied, then from Remark~\ref{RkRWkerngraphon} it follows that
\begin{equation*}
\mathcal{K}_W \equiv \frac{1}{\varepsilon}\mathcal{A}_{\varepsilon K_W}.
\end{equation*}
\begin{remark}\label{RemarkL2dw}
If there exists an $\varepsilon>0$ such that $d_{W}>\varepsilon$ almost everywhere, then we can define the space $L^2([0,1],d_W)$ as the space of functions $f:[0,1]\to\mathbb{R}$ such that
\begin{equation*}
    \int^1_{0}|f(x)|^2d_W(x)\, \mathrm{d}x<\infty.
\end{equation*}
That is, the space of square-integrable functions with respect to the measure which is absolutely continuous with respect to the Lebesgue measure with density $d_W$. This space coincides with $L^2([0,1])$, the space of the square-integrable functions with respect to the Lebesgue measure. Moreover, the norms induced by the scalar products \begin{equation*}
    \langle f,g\rangle_2=\int^1_{0}f(x)g(x)\, \mathrm{d}x
\end{equation*}
and 
\begin{equation*}
    \langle f,g\rangle_{2,d_W}=\int^1_{0}f(x)g(x)d_W(x)\, \mathrm{d}x
\end{equation*}
induce equivalent norms on $L^2([0,1])$ in this case. 
\end{remark}
\end{remark}
\begin{remark}\label{RemarkSelfAdj}
It is easy to observe that the random walk Laplacian operator for a graphon $W$ is self-adjoint considered as an operator from the space $L^2([0,1],d_W)$ defined in Remark~\ref{RemarkL2dw} to itself and therefore the spectrum of this operator is real valued. We use the same symbol $\mathcal L_W$ for the operator on $L^2([0,1])$ and $L^2([0,1],d_W)$ with a slight abuse of notation. This directly implies that for a graphon $W$ for which Assumption~\ref{AssumptionDeg} is satisfied, the spectrum of $ \mathcal{L}_W$  as an operator from $L^2([0,1])$ to $L^2([0,1])$ is real valued. The same holds for the random walk kernel $\mathcal{K}_W$ of $W$. See also Proposition 6.2 in \cite{RandomWalkGraphonLambiotte} for more details. 
\end{remark}
\begin{remark}
    \label{RmkCompactOp}
For a graphon $W$ satisfying Assumption~\ref{AssumptionDeg}, the random walk kernel $\mathcal{K}_W$ of $W$ is a compact operator. In particular, $\mathcal{K}_W$ has a discrete spectrum, i.e.\ a countable multiset $\operatorname{spec}(W)$ of nonzero (real)  
eigenvalues $\{\lambda_1, \lambda_2, \ldots\}$ such that $\lambda_n \to 0$.  
In particular, every nonzero eigenvalue has finite multiplicity.
\end{remark}
The result of Lemma~\ref{lem:continuity graphon laplacian} gives a direct consequence for the convergence of a sequence of random walk kernels, for which their associated graphons are convergent. Moreover, we also obtain the pointwise convergence of the spectrum of their associated random walk Laplacian operators.
\begin{theorem}\label{ThmSpecLapGraph}
Let $(W_n)_n$ be a sequence of graphons converging to a graphon $W$, and assume there exists $\varepsilon>0$ such that $d_{W_n}>\varepsilon$ almost everywhere. Then the spectra of the random walk Laplacian operators $(\mathcal{L}_{W_n})_n$ and the spectra of the random walk kernel operators $(\mathcal{K}_{W_n})_n$, converge pointwise to the spectrum of $\mathcal{L}_{W}$ and $\mathcal{K}_W$, respectively. 
\end{theorem}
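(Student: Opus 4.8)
The plan is to reduce the theorem to the convergence of the random walk kernels in cut distance, and then extract convergence of the spectra from the convergence of homomorphism densities of directed cycles, which encode the power sums of the eigenvalues.

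\textbf{Step 1: the random walk kernels converge in cut distance.} I would first check that the limit $W$ also has a uniformly bounded-below degree function. Choosing measure-preserving $\varphi_n$ with $\|W_n-W^{\varphi_n}\|_\square\to 0$ and testing the cut norm against $g\equiv 1$ gives $\|d_{W_n}-d_{W^{\varphi_n}}\|_{L^1}\to 0$; combined with $d_{W_n}>\varepsilon$ a.e.\ and the identity $\int(d_W\circ\varphi_n-\varepsilon)^-=\int(d_W-\varepsilon)^-$ (valid since $\varphi_n$ is measure-preserving), this forces $d_W\ge\varepsilon$ a.e. Hence $W$, and every relabelling $W^\varphi$, satisfies Assumption~\ref{AssumptionDeg} with $\varepsilon/2$. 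Using $d_{W^\varphi}=d_W\circ\varphi$ one checks $K_{W^\varphi}=(K_W)^\varphi$, so applying Lemma~\ref{lem:continuity graphon laplacian} to the pairs $(W_n,W^\varphi)$ and taking the infimum over measure-preserving $\varphi$ yields $\delta_\square(K_{W_n},K_W)\le\frac{4}{\varepsilon}\,\delta_\square(W_n,W)\to 0$.

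\textbf{Step 2: directed-cycle densities are power sums of eigenvalues.} For $k\ge 2$ let $\vec C_k$ be the directed $k$-cycle, an admissible test graph for Proposition~\ref{prop:counting lemma}; by Remark~\ref{RkRWkerngraphon} the functions $\frac{\varepsilon}{2}K_{W_n}$ and $\frac{\varepsilon}{2}K_W$ are $[0,1]$-valued kernel functions. Thus Proposition~\ref{prop:counting lemma} and Step 1 give $t(\vec C_k,K_{W_n})\to t(\vec C_k,K_W)$ for every $k\ge 2$. Exactly as in Example~\ref{ExHomDensCycl}, $t(\vec C_k,K_W)=\operatorname{Tr}(\mathcal K_W^k)$, and since $\mathcal K_W$ is compact (Remark~\ref{RmkCompactOp}) and self-adjoint for $\langle\cdot,\cdot\rangle_{2,d_W}$ (Remark~\ref{RemarkSelfAdj}), this equals $\sum_\lambda\lambda^k$ over the (real) eigenvalues of $\mathcal K_W$ with multiplicity, the series converging absolutely for $k\ge 2$; the same holds for each $W_n$. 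Taking $k=2$ also gives the uniform bound $\sum_\lambda\lambda^2=\int K_{W_n}(x,y)K_{W_n}(y,x)\,\mathrm{d}x\,\mathrm{d}y\le 1/\varepsilon$ for the eigenvalues of $\mathcal K_{W_n}$.

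\textbf{Step 3: from moments to pointwise convergence of the spectrum.} It remains to prove the elementary fact: if $(S_n)$ are real multisets with $\sum_{s\in S_n}s^2\le C$ and $\sum_{s\in S_n}s^k\to\sum_{s\in S_\infty}s^k$ for every $k\ge 3$, where $S_\infty$ is the eigenvalue multiset of $\mathcal K_W$, then, listing the positive elements in decreasing order and the negative ones in increasing order, the $j$-th entries of $S_n$ converge to those of $S_\infty$ for each $j$. Given a subsequence, the uniform $\ell^2$ bound makes each entry bounded, so by a diagonal argument one passes to a further subsequence along which all entries converge, to reals $a_j\ge 0$, $b_j\le 0$; the same $\ell^2$ bound lets one interchange limit and sum for $k\ge 3$ (tails are dominated by $(\max_j|s_j|)^{k-2}s^2$, the prefactor tending to $0$ uniformly in $n$ as the index grows), so $\sum_j a_j^k+\sum_j b_j^k=\sum_{s\in S_\infty}s^k$ for all $k\ge 3$. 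A peeling argument --- using $\sum s^{2m}/r^{2m}\to\#\{|s|=r\}$ and $\sum s^{2m+1}/r^{2m+1}\to\#\{s=r\}-\#\{s=-r\}$ as $m\to\infty$, with $r$ the current largest absolute value --- shows an $\ell^2$ real multiset is determined by its power sums of order $\ge 3$, so $\{a_j\}\cup\{b_j\}=S_\infty$. Hence the sorted eigenvalue sequences converge along every subsequence to the sorted eigenvalues of $\mathcal K_W$, and therefore converge. Finally $\mathcal L_{W_n}=\operatorname{Id}-\mathcal K_{W_n}$ and $\mathcal L_W=\operatorname{Id}-\mathcal K_W$, so the statement for the random walk Laplacians follows via the shift $\lambda\mapsto 1-\lambda$.

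\textbf{Expected obstacle and an alternative.} Steps 1--2 are routine given the machinery already in place; the real work is Step 3, and within it the justification that power sums of order $\ge 3$ can be passed to the limit along the subsequence and that they determine an $\ell^2$ real multiset, together with the soft compactness argument that packages these into genuine pointwise convergence. An alternative to Steps 2--3 would be to conjugate $\mathcal K_W$ by multiplication with $\sqrt{d_W}$ to the symmetric kernel operator $\mathcal A_{\widehat K_W}$, with $\widehat K_W(x,y)=W(x,y)/\sqrt{d_W(x)d_W(y)}$, prove the analogue of Lemma~\ref{lem:continuity graphon laplacian} for $\widehat K$, and then invoke the known convergence of graphon spectra under cut distance; this trades the moment bookkeeping for one additional continuity estimate.
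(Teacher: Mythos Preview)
Your proof is correct and follows essentially the same route as the paper: pass from $\delta_\square$-convergence of the $W_n$ to $\delta_\square$-convergence of the $K_{W_n}$ via Lemma~\ref{lem:continuity graphon laplacian}, deduce convergence of the directed-cycle densities $t(\vec C_k,K_{W_n})$ via Proposition~\ref{prop:counting lemma}, and infer pointwise convergence of the (real) spectra from the power sums. The paper compresses your Step~3 into a reference to \cite[Theorem~11.54]{LovaszGraphLimits} and leaves implicit both the check that $d_W\ge\varepsilon$ a.e.\ and the relabelling compatibility $K_{W^\varphi}=(K_W)^\varphi$ needed to upgrade Lemma~\ref{lem:continuity graphon laplacian} from a cut-norm to a cut-distance statement, so your write-up is in fact more complete than the paper's own proof.
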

\begin{proof}
By Lemma~\ref{lem:continuity graphon laplacian} we obtain that the sequence $(K_{W_n})_n$ converges to $K_W$ with respect to $\delta_\square$. Then for every $k\geq3$ and $n\to\infty$
  \begin{equation*}
      \sum_{\lambda\in\text{spec}(\mathcal{K}_{W_n})}\lambda^k\to\sum_{\lambda\in\text{spec}(\mathcal{K}_{W})}\lambda^k,
  \end{equation*}
  since $t(\vec{C}_k,K_{W_n})\to t(\vec{C}_k,K_W)$, as $n\to\infty$, by Proposition~\ref{prop:counting lemma}. Here $\vec C_k$ denotes the directed cycle of length $k$. From this it follows that the spectra of $(\mathcal{K}_{W_n})_n$ converge pointwise to the spectrum of $\mathcal{K}_{W}$ (see the proof of~\cite[Theorem 11.54]{LovaszGraphLimits} for a more detailed argument and recall that the spectrum of $\mathcal{K}_{W_n}$ is real, see Remark~\ref{RemarkSelfAdj}). The same argument applies to $(\mathcal{L}_{W_n})_n$.
\end{proof}

\section{Hypergraphs}
A \emph{hypergraph} is a pair $H=(V,E)$ where $V$ is the set of \emph{vertices}, and $E$ is the set of \emph{edges} such that $\emptyset \neq e \subset V$ for each $e\in E$.
In particular, we consider two special cases of hypergraphs throughout the paper. A hypergraph $H$ is called \emph{$r$-uniform} if $|e|=r$ for every $e\in E$. It is called a \emph{linear hypergraph} if two distinct edges intersect in at most one vertex, i.e.\ $|e\cap f|\leq 1$ for all $e,f\in E$ with $e\neq f$. We define the \emph{degree} of $v\in V$ as $\deg(v)=|\{e\in E:\ v\in e\}|$. The \emph{codegree} of $u,v\in V$ is defined as $\text{codeg}_H(u,v)=|\{e\in E:\{u,v\}\subset e\}|$, the number of edges that contain both $u$ and $v$.
For example, graphs coincide with the $2$-uniform hypergraphs and for a graph $H$ we have $\text{codeg}_H(u,v)=1$ if $\{u,v\}$ is an edge and $0$ otherwise.

Similarly to ordinary graphs, we can define a \emph{homomorphism} between two hypergraphs as a map $\varphi:V(F)\rightarrow V(H)$, such that for each $e\in E(F)$ we have $\varphi(e)\coloneq \{\varphi(v):v\in e\}\in E(H)$.
Again, $\hom(F,H)$ is the number of homomorphisms between $F$ and $H$, and $t(F,H)=\hom(F,H) / |V(H)|^{|V(F)|}$ is the \emph{homomorphism density}.

In order to study graphs we will consider tensors that are natural generalisations of matrices. 
Let $r,N\geq 2$. An $r$-th order $n$-dimensional \emph{tensor} $T$ consists of $N^r$ entries
\begin{equation*}
    T_{i_1,\ldots,i_r}\in \mathbb{R},
\end{equation*}
where $i_1,\dots,i_r\in[n]$.

\subsection{Uniform hypergraphs}
We need to restrict ourselves to uniform hypergraphs for now, since this section introduces notions related to convergent sequences of hypergraphs (further developed in Section~\ref{section:uniform limits}). Most of these notions are natural generalisations of their counterparts from Section~\ref{section:background} to the setting of uniform hypergraphs.

A natural generalisation of the adjacency matrix for graphs, is the \emph{adjacency tensor} $A:=A(H)$ of an $r$-uniform hypergraph $H=(V,E)$. This is the $r$-th order $N$-dimensional tensor with entries defined as
\begin{equation*}
A_{v_1,\ldots,v_r}=\begin{cases}
1, & \text{if }\{v_1,\ldots,v_r\}\in E,\\
0, & \text{otherwise.}
\end{cases}
\end{equation*}
We aim to study random walk Laplacians on hypergraphs, which are matrices depending only on pairwise relationships between vertices.
For this reason we can define an appropriate weighted graph derived from a hypergraph and study the properties of this object, which are preserved under this transformation.
\begin{definition}[Codegree-section of a uniform hypergraph]\label{def:Codegree2Sect}
    The \emph{codegree-section} of the $r$-uniform hypergraph $H=(V,E)$ with $|V|=N$, denoted by $G[H]$, is an ordinary weighted complete graph without loops on the vertex set $V$ with weight function $w:E\to \mathbb R_{\geq 0}$, where the weight of each edge $\{u,v\}\in E$ is equal to $w(\{u,v\}):=N^{2-r}\text{codeg}_H(u,v)$. 
\end{definition}

\begin{remark}\label{rmk:linearHyp}
    Observe that if $H$ is linear, then for every $u,v\in V$ the codegree $\text{codeg}_H(u,v)$ is only $0$ or $1$. Therefore, the codegree-section $G[H]$ of $H$ is a graph with constant weight $N^{2-r}$ (basically a simple graph scaled by the constant $N^{2-r}$). However, we observe that linear hypergraphs have to be sparse.
\end{remark}

\begin{remark}
 The adjacency matrix $A$ of $G[H]$ has entries
 \begin{equation}\label{eq:WeightCod}
     A_{uv} = w(\{u,v\}) = \frac{\textup{codeg}_H(u,v)}{N^{r-2}},
 \end{equation}   
and it is sometimes referred to as the adjacency matrix of the hypergraph $H$.
\end{remark}

\begin{remark}
    Observe that for $u,v\in V$ in an $r$-uniform hypergraph we have
\begin{equation}\label{eq:codegree hypergraph}
    \text{codeg}_H(u,v)=\frac{1}{(r-2)!}\sum_{(w_2,\ldots, w_{r-1})\in V^{r-2}}A_{u,w_2,\ldots,w_{r-1},v},
\end{equation}
where we emphasise that $V^{r-1}$ denotes the Cartesian power of $V$. Similarly, we also have
\begin{equation*}
    \deg(v) =\frac{1}{(r-1)!}\sum_{(w_2,\ldots, w_{r})\in V^{r-1}}A_{v,w_2,\ldots,w_{r}}.
\end{equation*}
\end{remark}

\begin{example}\label{ex:uniform ER}
    Define the \textit{$r$-uniform Erd\H{o}s--R\'enyi graph} on $N$ vertices with parameter $p$ by the random $r$-uniform hypergraph on $N$ vertices where we include each of the possible $\binom{N}{r}$ possible edges with probability $p$, which we denote by $\mathbb G(N,p;r)$. Then the codegree of $\mathbb G(N,p;r)$ follows a binomial distribution with parameters $\binom{N}{r}$ and $p$. As a result, we have 
    \begin{equation*}
    \mathbb P(\text{codeg}_{\mathbb G(N,p;r)}(u,v) =  k) = \binom{\binom{N}{r}}{k} p^{k}(1-p)^{\binom{N}{r}-k}. 
    \end{equation*}
    Note that $\text{codeg}_{\mathbb G(N,p;r)}(u_1,v_1)$ and $\text{codeg}_{\mathbb G(N,p;r)}(u_2,v_2)$ for mutually distinct $u_1$, $v_1$, $u_2$ and $v_1$ are not independent apart from the the trivial case $r=2$. 
\end{example}
The homomorphism number for uniform hypergraphs admits a similar expression as the homomorphism number for graphs \eqref{eq:homGraphformula}, using the expression above.
Let $F=(V(F),E(F))$ and $H=(V(H),E(H))$ be two $r$-uniform hypergraphs and let $A$ be the adjacency tensor of $H$. We find the following identity
   \begin{equation*}
    \hom(F,H)=\sum_{\varphi: V(F)\rightarrow V(H)}\prod_{\{v_1,\ldots,v_r\}\in E(F)}A_{\varphi(v_1),\ldots,\varphi(v_r)},
   \end{equation*}
or alternatively, after relabelling the vertices of $F$ with $V(F)=[m]$ and the vertices of $H$ with $V(H)=[N]$, we have
\begin{equation*}
    \hom(F,H)=\sum_{i_1,\ldots, i_m\in [N]}\prod_{\{a_1,\ldots,a_r\}\in E(F)}A_{i_{a_1},\ldots,i_{a_r}}.
\end{equation*}
The above expression can be used to show a one-to-one correspondence between the homomorphism numbers $(\hom(F,G[H]))_F$ of $G[H]$ and $(\hom(F_r,H))_F$ of $H$, where each hypergraph $F^{(r)}$ is the linear $(r+2)$-uniform hypergraph obtained from the corresponding ordinary graph $F$ as explained in Definition~\ref{def:Rsubdiv}, and $H$ is also an $(r+2)$-uniform hypergraph. The construction is as follows.
\begin{definition}\label{def:Rsubdiv}
    An \emph{$r$-subdivision} of a graph $F=(V(F),E(F))$ is an $(r+2)$-uniform hypergraph $F_r=(V(F_r),E(F_r))$, where for every edge $e=\{u,v\}\in E(F)$ we construct a hyperedge $h(e)\in E(F_r)$ such that $e\subset h(e)$ and for distinct hyperedges $e,e'\in E(F_r)$, the sets $h(e)\setminus e$ and $h(e')\setminus e'$ are disjoint.
\end{definition}

\begin{remark}\label{rmk:SparsityHypergraphs}
It is easy to see that for a dense converging sequence of graphs the corresponding sequence of $r$-subdivisions for $r>0$ converges to $0$. This follows from the fact that $r$-subdivisions are linear hypergraphs and from Remark ~\ref{rmk:linearHyp} we know that sequences of linear hypergraphs are sparse. This shows that, for a sequence of hypergraphs $H_n$ converging to zero, a normalisation of $G[H_n]$ might have nontrivial limit. See also \cite{zucal2023action}.
\end{remark}

Because the edge weights of $G[H]$ are determined by the codegree function of the hypergraph $H$, which in turn can be expressed via \eqref{eq:codegree hypergraph} in terms of the adjacency tensor, we obtain the following proposition. 
\begin{proposition}\label{prop:subdivision counts}
Let $F$ be an ordinary graph and $H$ an $(r+2)$-uniform hypergraph. Then
\begin{equation}\label{eq:SubdivisionEquality}
   \hom(F_r,H)= (r! |V(H)|^r)^{|E(F)|}\hom(F,G[H]),
\end{equation}
where $F_r$ is the $r$-subdivision of the graph $F$ and $G[H]$ is the codegree-section of $H$.
\end{proposition}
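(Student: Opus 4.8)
The plan is to expand $\hom(F_r,H)$ via the tensor formula for homomorphism numbers, carry out the sum over the "subdivision" vertices first, and recognise the codegree of $H$ appearing — which is precisely (up to normalisation) the edge weight of $G[H]$. Concretely, write $V(F)=[m]$ and fix a subdivision $F_r$ with, for each edge $e=\{a,b\}\in E(F)$, a hyperedge $h(e)=\{a,b\}\cup S_e$ where the $S_e$ are pairwise disjoint sets of size $r$ (disjoint also from $V(F)$). Then
\begin{equation*}
    \hom(F_r,H)=\sum_{\varphi:V(F_r)\to V(H)}\ \prod_{e=\{a,b\}\in E(F)}A_{\varphi(a),\varphi(S_e),\varphi(b)},
\end{equation*}
where $A$ is the adjacency tensor of $H$ and $\varphi(S_e)$ abbreviates the $r$-tuple of images of the vertices in $S_e$.

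First I would split the map $\varphi$ into its restriction $\psi$ to $V(F)=[m]$ and its restrictions to the disjoint blocks $S_e$. Since the blocks $S_e$ are pairwise disjoint and disjoint from $V(F)$, the sum over $\varphi$ factorises as a sum over $\psi:V(F)\to V(H)$ of a product over $e\in E(F)$ of independent sums over the $|V(H)|^r$ choices of images of $S_e$. That is,
\begin{equation*}
    \hom(F_r,H)=\sum_{\psi:V(F)\to V(H)}\ \prod_{e=\{a,b\}\in E(F)}\ \sum_{(w_1,\dots,w_r)\in V(H)^r} A_{\psi(a),w_1,\dots,w_r,\psi(b)}.
\end{equation*}
Next I would invoke the codegree identity \eqref{eq:codegree hypergraph}, which for the $(r+2)$-uniform hypergraph $H$ reads $\sum_{(w_1,\dots,w_r)\in V(H)^r}A_{u,w_1,\dots,w_r,v}=r!\,\mathrm{codeg}_H(u,v)$, so the inner sum equals $r!\,\mathrm{codeg}_H(\psi(a),\psi(b))$. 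Combining this with the definition of the codegree-section, $w(\{u,v\})=|V(H)|^{2-(r+2)}\mathrm{codeg}_H(u,v)=|V(H)|^{-r}\mathrm{codeg}_H(u,v)$, i.e.\ $\mathrm{codeg}_H(u,v)=|V(H)|^{r}\,A^{G[H]}_{uv}$ where $A^{G[H]}$ is the adjacency matrix of $G[H]$, we get $\sum_{(w_1,\dots,w_r)}A_{\psi(a),w_1,\dots,w_r,\psi(b)}=r!\,|V(H)|^{r}\,A^{G[H]}_{\psi(a)\psi(b)}$.

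Substituting back and pulling the constant $r!\,|V(H)|^{r}$ out of the product over the $|E(F)|$ edges,
\begin{equation*}
    \hom(F_r,H)=\big(r!\,|V(H)|^{r}\big)^{|E(F)|}\sum_{\psi:V(F)\to V(H)}\ \prod_{\{a,b\}\in E(F)}A^{G[H]}_{\psi(a)\psi(b)}=\big(r!\,|V(H)|^{r}\big)^{|E(F)|}\hom(F,G[H]),
\end{equation*}
using \eqref{eq:homGraphformula} for the last equality, which is exactly \eqref{eq:SubdivisionEquality}. The only genuinely delicate point is the factorisation step: one must be careful that the definition of $r$-subdivision guarantees the sets $h(e)\setminus e$ are pairwise disjoint and (being "new" vertices) disjoint from $V(F)$, so that the images of different blocks $S_e$ range freely and independently; this is precisely what makes the sum over $\varphi$ break into an outer sum over $\psi$ times a clean product of inner block-sums. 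Everything else is bookkeeping with the constants $r!$ and the power of $|V(H)|$. I would also note in passing that $|V(H)|=|V(F_r)|$ bookkeeping is not needed here since the statement is phrased directly in terms of $|V(H)|$, though if one wanted homomorphism densities rather than numbers one would additionally track $|V(F_r)|=m+r|E(F)|$.
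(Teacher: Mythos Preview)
Your proof is correct and follows essentially the same route as the paper: expand $\hom(F_r,H)$ via the adjacency tensor, use the pairwise disjointness of the subdivision blocks $S_e=h(e)\setminus e$ to factor the sum into an outer sum over $\psi:V(F)\to V(H)$ and independent inner sums over each $S_e$, identify each inner sum as $r!\,\mathrm{codeg}_H(\psi(a),\psi(b))=r!\,|V(H)|^r A^{G[H]}_{\psi(a)\psi(b)}$, and collect constants. If anything, your write-up of the factorisation step is slightly more explicit than the paper's.
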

\begin{proof}
Denote the vertices of $F$ and $H$ by $V(F)=[m]$ and $V(H)=[N]$ respectively, and the additional vertices introduced by constructing $F_r$ by $h_1(e),h_2(e),\ldots ,h_r(e)$, for each $e\in E$. We obtain the following expression for the RHS of \eqref{eq:SubdivisionEquality}
    \begin{equation*}
        \hom(F_r,H)=\sum_{i_1,\ldots, i_m\in [N]} \sum_{e\in E} \sum_{i_{h_1(e)},\ldots,i_{h_r(e)}\in[N]} \prod_{a\in E_r}A_{i_a},
    \end{equation*}
    where we write $A_{i_a}=A_{i_{a_1},\ldots,i_{a_{r+2}}}$ for $a=\{a_1,\ldots a_{r+2}\}\in E(F_r)$.
    Consider the hyperedge $h(e)\in E(F_r)$ corresponding to $e=\{u,v\}\in E(F)$, and suppose that $\{i_p:p\in h(e)\}\in E(H)$. Then the codegree of $i_u$ and $i_v$ is at least $1$. Or equivalently, in $G[H]$, the edge $\{i_u,i_v\}$ has nonzero weight, recall~\eqref{eq:WeightCod}. Under the condition $w(\{i_u,i_v\})>0$, we have $\{i_p:p\in h(e)\}\in E(H)$ if and only if the indices $i_{h_1(e)}$ through $i_{h_r(e)}$ correspond to a permutation of the remaining vertices of $\{i_p:p\in h(e)\}$.
    By construction none of the edges in $E(F_r)\setminus \{h(e)\}$ contain the internal vertices $h_1(e),\ldots,h_r(e)$ of $h(e)$. This gives the following expression 
    \begin{equation*}
        \sum_{i_{h_1(e)},\ldots,i_{h_r(e)}\in[N]} \prod_{a\in E(F_r)}A_{i_a}= \prod_{a\in E(F_r)\setminus h(e)} A_{i_{a}} \mathbbm{1}(w(\{i_u,i_v\})>0) \sum_{i_{h_1(e)},\ldots,i_{h_r(e)}=1}^n A_{i_{h(e)}},
    \end{equation*}
    where the last sum counts the number of shared edges between $i_u$ and $i_v$, including permutations of the internal vertices, and equals
    \begin{equation*}
        \sum_{i_{h_1(e)},\ldots,i_{h_r(e)}\in[N]} A_{i_{h(e)}} = r!\textup{codeg}_H(i_u,i_v).
    \end{equation*}
    The same argument holds for any edge of $F_r$, which results in
    \begin{equation*}
        \hom(F_k,H)=\sum_{i_1,\ldots, i_m\in [N]} \prod_{\{u,v\}\in E(F)} r! \textup{codeg}_H(i_u,i_v).
    \end{equation*}
    Here we recognise $A(G[H])_{i_u,i_v}=N^{-r}\textup{codeg}_H(i_u,i_v)$ as the weight of $i_u$ and $i_v$ in $G[H]$, with $A(G[H])$ the adjacency matrix of the 2-section $G[H]$. 
\end{proof}
\begin{remark}\label{rem:normalization codeg mat}
    From the preceding proposition we can deduce the following. Let $(H_n)_n$ be a sequence of $r$-uniform hypergraphs such that $(t(F,H_n))_n$ converges for every linear $r$-uniform hypergraph $F$. Then the sequence of graphs with adjacency matrices $A_n\coloneq A(G[H_n]) $ (recall~\eqref{eq:WeightCod}) is convergent in dense graph limits sense. Moreover, if $D_n$ is the diagonal matrix with the degrees of each vertex in $G[H_n]$ and satisfies $D_n>\varepsilon$ almost everywhere, then Lemma~\ref{lem:continuity graphon laplacian} implies the convergence of $(D_n^{-1}A_n)_n$. In the next section we introduce a metric $\delta_{\square,1}$ which captures the convergence of $(t(F,H_n))_n$ for all linear $r$-uniform $F$.
\end{remark}

The above remark highlights how the convergence of $(G[H_n])_n$ is implied by the convergence of $(H_n)_n$ with respect to $\delta_{\square, 1}$, and is related to convergence of the homomorphism densities $(t(F,H_n))_n$ for linear hypergraphs $F$. However, if one wishes to apply graph limit theory to matrix operators derived from hypergraphs, which arise from more complex relations between vertices, convergence with respect to $\delta_{\square, 1}$ is not enough in general. We will see this more in detail in the end of Section~\ref{section:uniform limits}.

\section{Uniform hypergraph limits}\label{section:uniform limits}
In this section we want to study limit objects of sequences of dense uniform hypergraphs. Since adjacency tensors generalise adjacency matrices for encoding uniform hypergraphs, it is natural to consider the following as the limit object for hypergraph sequences. We call a measurable symmetric function 
\begin{equation*}
    W:[0,1]^{r}\rightarrow [0,1]
\end{equation*}
an \emph{$r$-graphon}.

Similarly to the case of graphs, an $r$-uniform hypergraph $H$ on $N$ vertices with adjacency tensor $A$, can naturally be represented as an $r$-graphon $W_H$, defined as
\begin{equation*}
    W_H(x_1,\ldots, x_r)=A_{\lceil Nx_1\rceil,\ldots, \lceil Nx_r\rceil}.
\end{equation*}

\begin{remark}
An $r$-graphon is a trivial generalisation of graphons defined in Section~\ref{section:background}, which turns out to be the limit object for a sequence of $r$-uniform hypergraphs where the homomorphism densities for all linear $r$-uniform hypergraphs converge, as hinted in Remark~\ref{rem:normalization codeg mat}. In \cite{HypergraphonsZhao,hypergrELEK20121731,HypergraphsSzegedy2} the functions $W:[0,1]^{2^r-2}\rightarrow [0,1]$ are considered, which are shown to be the correct limit objects for sequences of $r$-uniform hypergraphs, where the homomorphism densities for any $r$-uniform hypergraph converge. In Example~\ref{ex:ER triangles the same} it is demonstrated how $r$-graphons are not able to encode certain structures. The additional coordinates represent all proper subsets of $[r]$. Their presence is linked to the need of suitable regularity partitions for hypergraphs \cite{GowersHypRegularity,RodlHyperReg1, RodlHyperReg2} and to the hierarchy of notions of quasi-randomness in the case of $r$-uniform hypergraphs  \cite{RandomneLimitTow}. However, for the moment we will focus only on $r$-graphons, the ``naive'' limit objects with $r$ coordinates. We will briefly discuss hypergraphons towards the end of this section.

\end{remark}
We measure the convergence of sequences of hypergraphs to $r$-graphons with the following norm. The \emph{$1$-cut norm} denoted by $\|\cdot\|_{\square,1}$ is defined as
\begin{equation}\label{eq:one cut norm 2}
\|U\|_{\square,1}=\sup_{f_1,\ldots,f_r}\bigg|\int_{[0,1]^r}U(x_1,\ldots,x_r)f_1(x_1)\cdots f_r(x_r) \mathrm{d}x_1\cdots \mathrm{d}x_r\bigg|,
\end{equation}
for an $r$-graphon $U$, where the supremum is taken over the measurable function $f_1,\ldots,f_r:[0,1]\to[0,1]$. The \emph{$1$-cut metric} for two $r$-graphons $U$ and $W$ is defined as
\begin{equation*}
\delta_{\square,1}(U,W)=\inf_{\varphi}\|U-W^{\varphi}\|_{\square,1},
\end{equation*}
where the infimum is again over all measure preserving maps $\varphi:[0,1]\to[0,1]$ and $W^{\varphi}$ is the $r$-graphon define as $W^{\varphi}(x_1,\ldots,x_r)\coloneq W(\varphi(x_1),\ldots, \varphi(x_r))$ for every $x_1,\ldots, x_r\in [0,1]$.

The space of equivalent $r$-graphons, where $U$ and $W$ are \emph{equivalent} if $\delta_{\square,1}(U,W)=0$, behaves well in the same sense as the space of equivalent graphons. We get the following analogue of Theorem~\ref{thm:graphon compact}, see for example \cite[Theorem 5.3]{LubetzkyZahoReplicaSymHyperg}.
\begin{theorem}\label{thm:r-graphon compact}
    Any sequence of $r$-graphons $(W_n)_n$ admits a convergent subsequence with respect to $\delta_{\square,1}$.
\end{theorem}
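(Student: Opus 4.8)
The plan is to transfer the standard proof of graphon compactness (\cite[Theorem 9.23]{LovaszGraphLimits}) to the $r$-uniform setting, replacing the Frieze--Kannan weak regularity lemma and the cut norm by their $1$-cut analogues, and then extracting the limit by a martingale argument.

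\emph{Step 1: a weak regularity lemma in $\|\cdot\|_{\square,1}$.} For a finite partition $\mathcal P=\{P_1,\dots,P_k\}$ of $[0,1]$ let $W_{\mathcal P}$ denote the step $r$-graphon obtained by averaging $W$ over each product block $P_{i_1}\times\cdots\times P_{i_r}$; note $\|W_{\mathcal P}\|_2^2\in[0,1]$. I claim that for every $\varepsilon>0$ and every partition $\mathcal P_0$ there is a refinement $\mathcal P\succeq\mathcal P_0$ with $|\mathcal P|\le|\mathcal P_0|\cdot 2^{r\lceil1/\varepsilon^2\rceil}$ and $\|W-W_{\mathcal P}\|_{\square,1}\le\varepsilon$. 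This follows by the energy increment method: since the functional in \eqref{eq:one cut norm 2} is linear in each $f_i$ separately, its supremum is attained at $\{0,1\}$-valued arguments, so $\|W-W_{\mathcal P}\|_{\square,1}=\sup_{S_1,\dots,S_r}\bigl|\int_{S_1\times\cdots\times S_r}(W-W_{\mathcal P})\bigr|$. If this exceeds $\varepsilon$, refine $\mathcal P$ by the witnessing sets $S_1,\dots,S_r$ to obtain $\mathcal P'$ (this multiplies the number of parts by at most $2^r$); then $\mathbf 1_{S_1}\otimes\cdots\otimes\mathbf 1_{S_r}$ is $\mathcal P'^{\otimes r}$-measurable, and since $W_{\mathcal P}$ is the $L^2$-projection of $W_{\mathcal P'}$ onto the $\mathcal P^{\otimes r}$-measurable functions,
\[
\varepsilon<\langle W_{\mathcal P'}-W_{\mathcal P},\ \mathbf 1_{S_1}\otimes\cdots\otimes\mathbf 1_{S_r}\rangle\le\|W_{\mathcal P'}-W_{\mathcal P}\|_2,
\]
so $\|W_{\mathcal P'}\|_2^2=\|W_{\mathcal P}\|_2^2+\|W_{\mathcal P'}-W_{\mathcal P}\|_2^2>\|W_{\mathcal P}\|_2^2+\varepsilon^2$. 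As the energy is bounded by $1$, this can happen at most $\lceil1/\varepsilon^2\rceil$ times.

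\emph{Step 2: nested partitions and a diagonal argument.} Iterating Step 1 with $\varepsilon=1/m$ from the trivial partition yields, for each $r$-graphon $W$, a nested sequence $\mathcal P^W_1\preceq\mathcal P^W_2\preceq\cdots$ with $\|W-W_{\mathcal P^W_m}\|_{\square,1}\le1/m$ and $|\mathcal P^W_m|\le k_m$, where $k_m:=\prod_{j=1}^m 2^{r\lceil j^2\rceil}$ is \emph{independent of} $W$; pad each partition with null blocks so that $|\mathcal P^W_m|=k_m$ exactly. Apply this to each $W_n$ in the given sequence. Since the $\mathcal P^{W_n}_m$ are nested, there is a single measure-preserving $\sigma_n$ realizing all of them simultaneously as nested interval-partitions $\mathcal I_{n,m}$ of $[0,1]$ (the standard realization of a nested sequence of partitions by nested interval partitions, as in \cite{LovaszGraphLimits}); replacing $W_n$ by $W_n^{\sigma_n}$ does not change its $\delta_{\square,1}$-class, so we may assume $\mathcal P^{W_n}_m=\mathcal I_{n,m}$. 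Now for each fixed $m$ the step function $(W_n)_{\mathcal I_{n,m}}$ is encoded by the $k_m-1$ interval endpoints and the $k_m^r$ block values, all lying in the compact set $[0,1]$. By a diagonal argument, pass to a subsequence of $n$ along which, for every $m$, these data converge; let $\mathcal I_m$ and $U_m$ be the limits. The $\mathcal I_m$ are nested, the $U_m$ are symmetric and $[0,1]$-valued, and $(W_n)_{\mathcal I_{n,m}}\to U_m$ in $L^1$, hence in $\|\cdot\|_{\square,1}$ since $\|\cdot\|_{\square,1}\le\|\cdot\|_1$.

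\emph{Step 3: passing to the limit.} Because the $\mathcal I_{n,m}$ are nested, for each $n$ the sequence $\bigl((W_n)_{\mathcal I_{n,1}},(W_n)_{\mathcal I_{n,2}},\dots\bigr)$ is a martingale (each term is the conditional expectation of the next onto a coarser product $\sigma$-algebra); letting $n\to\infty$ and using the $L^1$-convergence of the pieces, $(U_1,U_2,\dots)$ is a $[0,1]$-valued martingale with respect to the filtration generated by $\mathcal I_m^{\otimes r}$. By the martingale convergence theorem it converges a.e. and in $L^1$ to a limit $W:=\lim_m U_m$, which after correction on a null set is measurable, symmetric and $[0,1]$-valued, i.e.\ an $r$-graphon, with $U_m\to W$ in $\|\cdot\|_{\square,1}$. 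Finally, given $\delta>0$, choose $m$ with $1/m<\delta/3$ and $\|U_m-W\|_{\square,1}<\delta/3$, then $N$ with $\|(W_n)_{\mathcal I_{n,m}}-U_m\|_{\square,1}<\delta/3$ for $n\ge N$; the triangle inequality
\[
\|W_n-W\|_{\square,1}\le\|W_n-(W_n)_{\mathcal I_{n,m}}\|_{\square,1}+\|(W_n)_{\mathcal I_{n,m}}-U_m\|_{\square,1}+\|U_m-W\|_{\square,1}<\delta
\]
gives $\delta_{\square,1}(W_n,W)<\delta$ for all $n\ge N$ along the subsequence.

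\emph{Main obstacle.} The genuine content is Step 1: the $1$-cut weak regularity lemma with a part-count bound \emph{independent of $W$}. Although it is a direct adaptation of Frieze--Kannan, one must be careful that the extremal test functions are indicators (so that refining by them is legitimate) and that the energy increment argument survives the multilinear (rather than bilinear) structure. The only other nontrivial point is the simultaneous realization of the nested partitions as nested interval partitions, which is standard; everything else (diagonalization, martingale convergence, the inequality $\|\cdot\|_{\square,1}\le\|\cdot\|_1$) is routine.
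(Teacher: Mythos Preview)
Your proof is correct and self-contained; it is the natural transfer of the Lov\'asz--Szegedy argument (weak regularity via energy increment, nested interval realizations, diagonal extraction, martingale limit) from the bilinear to the $r$-linear setting, and each step goes through as you describe. The paper itself does not give a proof of this theorem at all: it simply quotes the result from \cite[Theorem~5.3]{LubetzkyZahoReplicaSymHyperg}, whose proof is precisely the adaptation you have written out. So your approach is not merely compatible with the paper's---it \emph{is} the argument the paper defers to, made explicit. One cosmetic point: in the displayed inequality of Step~1 you should write $\varepsilon<\bigl|\langle W_{\mathcal P'}-W_{\mathcal P},\,\mathbf 1_{S_1}\otimes\cdots\otimes\mathbf 1_{S_r}\rangle\bigr|$, since the integral may be negative; this does not affect the energy increment.
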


\begin{example}\label{ex:ER triangles the same}
    Consider an ordinary Erd\H{o}s--R\'enyi graph $\mathbb G(N,p;2)$, as in Example~\ref{ex:uniform ER}, and construct the following $3$-uniform hypergraph by including each possible edge $\{u,v,w\}$ whenever the triangle on the vertices $u$, $v$ and $w$ is present in $\mathbb G(N,p;2)$. We denote this random $3$-uniform hypergraph by $\mathbb T(N,p)$. It is a commonly used example that the sequences $(\mathbb T(n,p))_n$ and $(\mathbb G(n,p^3;3))_n$ converge almost surely to the same limit object, $W\equiv p^3$, with respect to the $1$-cut norm, while being structurally different. For a more detailed explanation see \cite[Example 23.11]{LovaszGraphLimits} and \cite[Section 1.2]{HypergraphonsZhao}.
\end{example}

Similarly to Theorem~\ref{thm:graphon conv equiv}, we find that the convergence of a sequence of $r$-graphons with respect to the $1$-cut metric is equivalent to the convergence of the homomorphism densities for all linear $r$-uniform hypergraphs \cite{HypergraphonsZhao}. For an $r$-graphon and an $r$-uniform hypergraph $F$, we define the homomorphism density $t(F,W)$ as
\begin{equation*}
    t(F,W) = \int_{[0,1]^{|V(F)|}} \prod_{\{v_1,\ldots,v_r\}\in E(F)} W(x_{v_1}, \ldots, x_{v_r}) \, \prod_{v\in V(F)}\mathrm{d} x_v.
\end{equation*}
The analogue of Theorem~\ref{thm:graphon conv equiv} follows from extending the Counting Lemma and Inverse Counting Lemma to $r$-graphons. In \cite[Theorem 5.5]{LubetzkyZahoReplicaSymHyperg} we find the Counting Lemma, i.e.\ Proposition~\ref{prop:counting lemma}, generalised to $r$-graphons and linear $r$-uniform test graphs $F$. Similarly, the Inverse Counting Lemma for $r$-graphons extends naturally by following the approach in \cite[Corollary 4.9.6]{Zhao_2023}.

We introduce the following graphon constructed from an $r$-graphon that coincides with the codegree-section of an $r$-uniform hypergraph as in Definition~\ref{def:Codegree2Sect}.

\begin{definition}[Codegree-section of an $r$-graphon]
    The \emph{codegree-section} of the $r$-graphon $W$ is the graphon $G[W]$ defined by
    \begin{equation*}
        G[W](x,y)=\frac{1}{(r-2)!}\int_{[0,1]^{r-2}}W(x,x_2,\ldots,x_{r-1},y)\,\mathrm{d}x_2\cdots\mathrm{d}x_{r-1}.
    \end{equation*}
\end{definition}

Now suppose that we have a sequence of $r$-graphons $(W_n)_n$. From Proposition~\ref{prop:subdivision counts} it can already be deduced that $(G[W_n])_n$ has a limiting graphon. Then the results from Section~\ref{section:laplacian convergence} apply and we obtain the spectral convergence of $(G[W_n])_n$ and the associated random walk Laplacians. However, at this point it could be unclear if the limit $r$-graphon $W$ (with respect to the $1$-cut distance) is such that $G[W]$ is equal to the limit of $(G[W_n])_n$. We show that convergence in the metric $1$-cut distance, that is equivalent to the convergence of all linear subgraph densities, implies that the limit objects are compatible. The result follows from the following lemma.
\begin{lemma}\label{lemm:ContinuCodegreeGraphon}
For two $r$-graphons $U$ and $W$ we have the following inequality
\begin{equation*}
    \|G[W]-G[U]\|_{\square}\leq \frac{1}{(r-2)!}  \|W-U\|_{\square,1}.
\end{equation*}
\end{lemma}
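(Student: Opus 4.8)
The plan is to reduce the cut norm of the difference of the codegree-sections directly to the $1$-cut norm of $W-U$ by unwinding the definitions and pulling the test functions $f,g$ on $[0,1]$ into the $r$-fold integral. Concretely, for measurable $f,g:[0,1]\to[0,1]$ we write
\begin{equation*}
\int_{[0,1]^2}\bigl(G[W]-G[U]\bigr)(x,y)f(x)g(y)\,\mathrm{d}x\,\mathrm{d}y
=\frac{1}{(r-2)!}\int_{[0,1]^r}\bigl(W-U\bigr)(x_1,\dots,x_r)f(x_1)g(x_r)\,\mathrm{d}x_1\cdots\mathrm{d}x_r,
\end{equation*}
by Fubini, where on the right-hand side $x=x_1$ and $y=x_r$ are the first and last coordinates and $x_2,\dots,x_{r-1}$ are the integration variables from the definition of the codegree-section.

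Next I would observe that the right-hand integral has exactly the form appearing in the definition \eqref{eq:one cut norm 2} of $\|\cdot\|_{\square,1}$, with the choice $f_1=f$, $f_r=g$, and $f_2=\cdots=f_{r-1}\equiv 1$. Since $f,g$ and the constant function $1$ all map $[0,1]$ into $[0,1]$, this tuple is admissible in the supremum defining $\|W-U\|_{\square,1}$, so the absolute value of the right-hand integral is at most $\|W-U\|_{\square,1}$. Taking the supremum over all admissible $f,g$ on the left then gives
\begin{equation*}
\|G[W]-G[U]\|_{\square}\leq\frac{1}{(r-2)!}\|W-U\|_{\square,1},
\end{equation*}
which is the claim.

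The only technical point worth checking is the Fubini interchange: $W-U$ is bounded and measurable on $[0,1]^r$ and $f(x_1)g(x_r)$ is a bounded measurable function, so the integrand is integrable and the interchange of the order of integration is justified. There is essentially no obstacle here; the statement is a bookkeeping lemma, and the main thing to get right is aligning the two "exterior" coordinates $x_1,x_r$ of the codegree-section with the role of $f$ and $g$, and noting that inserting constant test functions in the remaining $r-2$ slots is exactly what turns the $1$-cut norm into an integral against the codegree-section kernel.
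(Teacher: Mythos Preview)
Your proof is correct and follows essentially the same approach as the paper: unwind the definition of $G[W]-G[U]$ via Fubini and recognise the resulting $r$-fold integral as an instance of the expression in the $1$-cut norm with $f_1=f$, $f_r=g$, and the remaining test functions set to $1$. The paper's argument is slightly more terse but identical in substance.
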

\begin{proof}
By observing that
\begin{equation*}
\begin{aligned}
    &\sup_{f,g}\bigg|\int_{[0,1]^r}(W(x,x_2,\ldots,x_{r-1},y) - U(x,x_2,\ldots,x_{r-1},y))f(x)g(y)\,\mathrm{d}x\mathrm{d}x_2\cdots\mathrm{d}x_{r-1}\mathrm{d}y\bigg|\\
    &\qquad\qquad\qquad\leq     \sup_{f_1,\ldots,f_r}\bigg|\int_{[0,1]^r}(W(x_1,\ldots,x_r) - U(x_1,\ldots,x_r))f_1(x_1)\cdots f_r(x_r)\,\mathrm{d}x_1\cdots\mathrm{d}x_{r}\bigg|,
\end{aligned}
\end{equation*}
where the suprema are taken over measurable functions from $[0,1]$ to $[0,1]$.
It follows that
\begin{equation*}
    \sup_{f,g}\bigg|\int_{[0,1]^2} \left(G[W](x,y)-G[U](x,y)\right)f(x)g(y)\,\mathrm{d}x\mathrm{d}y\bigg|\leq \frac{1}{(r-2)!}\|W-U\|_{\square,1}.
\end{equation*}
\end{proof}

As a result, we get that the convergence of a sequence of hypergraphs in the $1$-cut distance implies the convergence of the adjacency matrices in the cut distance, which in turn implies the convergence of the spectra of these matrices and the derived random walk Laplacian. 
\begin{corollary}\label{cor:ConvergenceSpectrHyp}
Let $(W_n)_n$ be a sequence of $r$-graphons converging to an $r$-graphon $W$ in $1$-cut metric $\delta_{\square,1}$, then the sequence of graphons $(G[W_n])_n$ converges to $G[W]$ in cut metric $\delta_{\square}$. In particular, the spectra of $(G[W_n])_n$ converge pointwise to the spectrum of $G[W]$.

Moreover, if there exists an $\varepsilon>0$ such that $d_{W_n}>\varepsilon$ for every $n$ then also the spectra of the random walk Laplacians $\mathcal{L}_{G[W_n]}$ of $(G[W_n])_n$ converge pointwise to the spectrum of the random walk Laplacian $\mathcal{L}_{G[W]}$ of $G[W]$.
\end{corollary}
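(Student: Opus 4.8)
The plan is to assemble Corollary~\ref{cor:ConvergenceSpectrHyp} directly from the lemmas already proved, treating it essentially as a chaining of three implications. First I would fix a sequence $(W_n)_n$ of $r$-graphons with $\delta_{\square,1}(W_n,W)\to 0$, so that by definition there exist measure-preserving maps $\varphi_n$ with $\|W_n-W^{\varphi_n}\|_{\square,1}\to 0$. Applying Lemma~\ref{lemm:ContinuCodegreeGraphon} to the pair $W_n$ and $W^{\varphi_n}$ gives
\begin{equation*}
    \|G[W_n]-G[W^{\varphi_n}]\|_{\square}\leq \frac{1}{(r-2)!}\|W_n-W^{\varphi_n}\|_{\square,1}\to 0.
\end{equation*}
The small point to check here is that the codegree-section commutes with relabelling, i.e.\ $G[W^{\varphi_n}]=G[W]^{\varphi_n}$, which is immediate from the definition of $G[\cdot]$ and the change of variables $x_i\mapsto\varphi_n(x_i)$ in the integral (the internal variables $x_2,\dots,x_{r-1}$ are integrated against Lebesgue measure, which $\varphi_n$ preserves). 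Hence $\delta_{\square}(G[W_n],G[W])\leq\|G[W_n]-G[W]^{\varphi_n}\|_{\square}\to 0$, which is the first assertion.

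For the spectral conclusion I would invoke Example~\ref{ExHomDensCycl} together with Proposition~\ref{prop:counting lemma} (the Counting Lemma): convergence in $\delta_{\square}$ gives $t(C_k,G[W_n])\to t(C_k,G[W])$ for every $k$, hence all moments $\sum_{\lambda\in\mathrm{spec}(\mathcal{A}_{G[W_n]})}\lambda^k$ converge, and from this the eigenvalue sequences converge pointwise exactly as in the proof of \cite[Theorem 11.54]{LovaszGraphLimits} (using that $G[W_n]$ is a bounded symmetric kernel, so $\mathcal{A}_{G[W_n]}$ is compact self-adjoint with real spectrum). This is the same argument already used in the proof of Theorem~\ref{ThmSpecLapGraph}, so I would simply cite it.

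For the last sentence, the extra hypothesis $d_{W_n}>\varepsilon$ for all $n$ is needed to feed Lemma~\ref{lem:continuity graphon laplacian}. Here I must first relate $d_{W_n}$ to the degree function of the graphon $G[W_n]$: a direct computation shows $d_{G[W_n]}(x)=\int_0^1 G[W_n](x,y)\,\mathrm{d}y$ equals a constant multiple (namely $\tfrac{1}{(r-2)!}$) of $\int_{[0,1]^{r-1}}W_n(x,x_2,\dots,x_r)\,\mathrm{d}x_2\cdots\mathrm{d}x_r$, so a uniform lower bound on $d_{W_n}$ transfers to a uniform lower bound $\varepsilon'>0$ on $d_{G[W_n]}$, and the same for $G[W]$ in the limit. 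Then Lemma~\ref{lem:continuity graphon laplacian} applied to $G[W_n]$ and $G[W]^{\varphi_n}$ gives $\|K_{G[W_n]}-K_{G[W]^{\varphi_n}}\|_{\square}\leq\frac{2}{\varepsilon'}\|G[W_n]-G[W]^{\varphi_n}\|_{\square}\to 0$, hence $K_{G[W_n]}\to K_{G[W]}$ in $\delta_{\square}$ (again using $K_{G[W]^{\varphi_n}}=K_{G[W]}^{\varphi_n}$, since both $W\mapsto G[W]$ and $G\mapsto K_G$ are relabelling-equivariant). By Remark~\ref{RmkCompactOp} the operators $\mathcal{K}_{G[W_n]}$ are compact, and by Remark~\ref{RemarkSelfAdj} their spectra are real; the directed Counting Lemma Proposition~\ref{prop:counting lemma} applied to directed cycles $\vec C_k$ gives convergence of all moments $\sum_\lambda\lambda^k$ of $\mathrm{spec}(\mathcal{K}_{G[W_n]})$, and therefore, exactly as in Theorem~\ref{ThmSpecLapGraph}, pointwise convergence of the spectra of $\mathcal{K}_{G[W_n]}$ and of the random walk Laplacians $\mathcal{L}_{G[W_n]}=\mathrm{Id}-\mathcal{K}_{G[W_n]}$ to those of $\mathcal{L}_{G[W]}$.

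The only genuine content beyond bookkeeping is the equivariance of the constructions under measure-preserving relabellings (so that the $\delta_{\square,1}$-infimum over $\varphi$ can be pushed through $G[\cdot]$ and $K$) and the transfer of the degree lower bound from $W_n$ to $G[W_n]$; I expect the degree-bound transfer to be the step most worth spelling out, since it is what makes Theorem~\ref{ThmSpecLapGraph} applicable, whereas the rest is a routine concatenation of Lemma~\ref{lemm:ContinuCodegreeGraphon}, Lemma~\ref{lem:continuity graphon laplacian}, and the moment-convergence argument already established.
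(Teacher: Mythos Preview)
Your proposal is correct and follows exactly the route the paper intends: the corollary is stated without proof, as an immediate consequence of Lemma~\ref{lemm:ContinuCodegreeGraphon} for the first part and Theorem~\ref{ThmSpecLapGraph} for the Laplacian part. You are in fact more careful than the paper, explicitly verifying the relabelling-equivariance $G[W^{\varphi}]=G[W]^{\varphi}$ and $K_{G^{\varphi}}=(K_G)^{\varphi}$ needed to pass from the cut-norm inequality of Lemma~\ref{lemm:ContinuCodegreeGraphon} to the cut-\emph{metric} statement, and spelling out the transfer of the degree lower bound from $W_n$ to $G[W_n]$; these are exactly the bookkeeping steps the paper leaves implicit.
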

According to Corollary~\ref{cor:ConvergenceSpectrHyp} it is sufficient to show convergence for a sequence of hypergraphs $(H_n)_n$ with respect to the $1$-cut metric in order for $(G[H_n])_n$ to converge with respect to the cut metric. 

\begin{example}\label{Examp2ErdosHypergraph}
    Consider the sequences $(\mathbb T(n,p))_n$ and $(\mathbb G(n,p^3;3))_n$ from Example~\ref{ex:ER triangles the same}. Almost surely, we have $d_{\mathbb T(n,p)}>\varepsilon$ and $d_{\mathbb G(n,p^3;3)}>\varepsilon$ almost everywhere for some $0<\varepsilon<p^3$. As a result, Corollary~\ref{cor:ConvergenceSpectrHyp} applies to $(\mathbb T(n,p))_{n\geq n_0}$ and $(\mathbb G(n,p^3;3))_{n\geq n_0}$. That is, although they are structurally different hypergraph sequences (as explained in Example~\ref{ex:ER triangles the same}), the random walk Laplacians of these sequences are indistinguishable in the limit, and so are their (pointwise) limiting spectra.
\end{example}

The previous example shows that some combinatorial information is lost in the limit when convergence in $1$-cut norm is considered. For this reason, there are matrices/operators for which the spectrum is not continuous with respect to the $1$-cut norm. We will show this with a simple example and we will discuss other convergence notions that capture the properties of this operators in the limit. 

Let us consider a $r$-uniform hypergraph $H=(V_H,E_H)$ with $|V_H|=n$ and its adjacency tensor $A$. The vertex-vertex intersection count matrix is the symmetric $n\times n$ matrix $B=B(H)$ with entries 
\begin{equation}\label{HypergraphMatrixDifferent}
    B_{uv}=\frac{1}{(r-1)!}\sum^n_{i_1,\ldots,i_{r-1}=1}A_{u,i_1,\ldots,i_{r-1}}A_{i_1,\ldots,i_{r-1},v}
\end{equation}
and the vertex-vertex intersection graph $B[H]$ is the graph with the matrix $B(H)$ as adjacency matrix. 

Observe that the entry $B_{uv}$ is the number of intersections of two hyperedges one containing $u$ and the other $v$ which have to intersect at least in all the $r-1$ elements of the two hyperedges except $u$ and $v$. In particular, $B_{uu}$ is the degree of the vertex $u$.

To understand the combinatorics behind these objects we show how certain homomorphism numbers of hypergraphs do relate to homomorphism numbers of their vertex-vertex intersection graphs, in a similar way to what we did in Proposition~\ref{prop:subdivision counts} for codegree-sections. In order to do this we will consider the following operation on graphs.

\begin{definition}
The $r$-\emph{intersection pattern hypergraph} of a graph $F$ is the $(r+1)$-uniform hypergraph $F^{(r)}=(V_r,E^{(r)})$ in which for every edge of the graph $e=\{v,u\}\in E$ we construct two edges of the hypergraph $h(e,v)=\{v, w_{e,1},\ldots ,w_{e,r}\}\in E^{(r)}$ and $h(e,u)=\{u, w_{e,1},\ldots ,w_{e,r}\}\in E^{(r)}$ where $w_{e,i}$ are pairwise different new elements for every $i\in [r]$ and $e\in E$ and $V_r=V\cup \left(\bigcup_{e\in E}\{ w_{e,1},\ldots ,w_{e,r}\}\right)$.
\end{definition}

The homomorphism numbers of intersection pattern hypergraphs into hypergraphs and homomorphism numbers of graphs are related in the following way. We omit the proof because the approach is identical to the proof of Proposition~\ref{prop:subdivision counts}. 

\begin{proposition}\label{lemm:homDensHypIntersPatt}
We have the following identity for the homomorphism numbers \begin{equation*}
    \hom(F^{(r)},H)=(r!)^{|E(F)|}\hom(F,B[H])
\end{equation*}
where we recall that $F^{(r)}$ is the intersection pattern hypergraph of $F$. 
\end{proposition}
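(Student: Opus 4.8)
The plan is to mimic the proof of Proposition~\ref{prop:subdivision counts} almost verbatim, replacing the codegree-section construction by the vertex-vertex intersection construction. First I would fix notation: write $V(F)=[m]$ and $V(H)=[N]$, and for each edge $e=\{u,v\}\in E(F)$ denote by $w_{e,1},\dots,w_{e,r}$ the $r$ new vertices introduced in $F^{(r)}$, so that $F^{(r)}$ carries the two hyperedges $h(e,v)=\{v,w_{e,1},\dots,w_{e,r}\}$ and $h(e,u)=\{u,w_{e,1},\dots,w_{e,r}\}$. Expanding $\hom(F^{(r)},H)$ via the tensor formula gives a sum over maps $\varphi:V_r\to[N]$ of a product of entries $A_{\varphi(h)}$ over all hyperedges $h\in E^{(r)}$.

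The key observation is that the new vertices $w_{e,1},\dots,w_{e,r}$ occur only in the two hyperedges $h(e,v)$ and $h(e,u)$ associated with the single edge $e$, and in no others. Hence, in the big sum, one can first sum over the values $i_1,\dots,i_r$ assigned to $w_{e,1},\dots,w_{e,r}$ while holding the images of $V(F)$ fixed; this inner sum factors out as
\[
    \sum_{i_1,\dots,i_r=1}^{N} A_{\varphi(v),i_1,\dots,i_r}\,A_{i_1,\dots,i_r,\varphi(u)},
\]
which by the symmetry of $A$ and the definition \eqref{HypergraphMatrixDifferent} equals exactly $(r-1)!\,B_{\varphi(u)\varphi(v)}= (r-1)!\,B[H]_{\varphi(u)\varphi(v)}$ (note: the exponent bookkeeping needs care — I would double-check whether the prefactor should be $(r-1)!$ or something slightly different, and reconcile it with the claimed constant $(r!)^{|E(F)|}$; there may also be a hidden $N$-power that must cancel or be absorbed into the normalisation, exactly as in Proposition~\ref{prop:subdivision counts}). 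Iterating this over all edges $e\in E(F)$, each edge contributes one such factor, and what remains is precisely $\hom(F,B[H])$ times the product of the per-edge constants, yielding $\hom(F^{(r)},H)=(r!)^{|E(F)|}\hom(F,B[H])$.

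The main obstacle, and the only place real care is needed, is the combinatorial constant: one must verify that summing over the $r$ internal vertices of each pair of hyperedges produces exactly the factor $(r-1)!$ that matches the $\frac{1}{(r-1)!}$ in the definition of $B$, and then confirm that the two hyperedges per edge together contribute $r!$ (rather than $(r!)^2$ or $((r-1)!)^2$) to the overall power — in particular whether the factor of $r!$ in the statement comes from the internal-vertex summation alone or whether there is an additional $r!$ from relabellings, as happens in the $r$-subdivision case. Since the statement says the proof is "identical to the proof of Proposition~\ref{prop:subdivision counts}," I would simply transcribe that argument, flag the single line where the internal sum is evaluated, and cite the symmetry of the adjacency tensor to identify that sum with an entry of $B$; everything else is routine index manipulation.
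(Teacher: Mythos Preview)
Your approach is exactly what the paper intends (it omits the proof precisely because it is a verbatim repeat of the argument for Proposition~\ref{prop:subdivision counts}), and the structure you outline is correct.

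The only place you hesitate is the constant, so let me settle it. Since $F^{(r)}$ is $(r+1)$-uniform, the hypergraph $H$ here is $(r+1)$-uniform; substituting $r+1$ for $r$ in the definition \eqref{HypergraphMatrixDifferent} gives
\[
B_{\varphi(u)\varphi(v)}=\frac{1}{r!}\sum_{i_1,\dots,i_r=1}^{N} A_{\varphi(u),i_1,\dots,i_r}\,A_{i_1,\dots,i_r,\varphi(v)},
\]
so your inner sum is exactly $r!\,B_{\varphi(u)\varphi(v)}$, not $(r-1)!\,B_{\varphi(u)\varphi(v)}$. Each edge of $F$ contributes one such factor, yielding $(r!)^{|E(F)|}$ on the nose; there is no additional relabelling factor and no hidden power of $N$ (unlike Proposition~\ref{prop:subdivision counts}, the matrix $B[H]$ carries no $N^{2-r}$ normalisation).
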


In the rest of this section, in order to keep the explanation and the notation as simple as possible, we restrict ourselves to $3$-uniform hypergraphs. However, the discussion naturally extends to $r$-uniform hypergraphs for any $r\geq 3$.

We can consider the graphon representation of the vertex-vertex intersection graph for $r=3$. For a $3$-graphon $W$ the continuum version of~\eqref{HypergraphMatrixDifferent} is the vertex-vertex intersection graphon
\begin{equation}\label{eq:DefStrangeContrHype}
B(W)(x_1,x_4)=\frac{1}{2}\int_{[0,1]^2}W(x_1,x_2,x_3)W(x_2,x_3,x_4)\mathrm{d}x_2\mathrm{d}x_3.
\end{equation}
However, this graphon is not continuous with respect to the convergence in $1$-cut norm of $3$-graphons as shown by the following example. 

\begin{example}
As already remarked in Example~\ref{Examp2ErdosHypergraph}, the sequences $(\mathbb T(n,p))_n$ and $(\mathbb G(n,p^3;3))_n$ from Example~\ref{ex:ER triangles the same} both converge almost surely to the constant hypergraphon $W \equiv p^3$ in the $1$-cut norm. Therefore, $B(W)\equiv p^6$. However, for $(\mathbb T(n,p))_n$, which are the triangles of Erd\H{o}s--R\'enyi graphs, the sequence $B(W_{\mathbb T(n,p)})$ converges almost surely to the constant graphon $U\equiv p^5$. This can be seen by observing that both $A_{i,j,k}$ and $A_{j,k,\ell}$ are present in $\mathbb T(n,p)$ for distinct vertices $i$, $j$, $k$ and $\ell$, only if the five edges $\{i,j\}$, $\{j,k\}$, $\{i,k\}$, $\{k,\ell\}$, and $\{j,\ell\}$ are all present in the original Erd\H{o}s--R\'enyi graph. As a result $\P(A_{i,j,k}A_{j,k,\ell}=1)=p^5$. 
The results of \cite{Janson2004} can be applied to show that $(B(W_{\mathbb T(n,p)}))_n$ converges in cut norm to the constant graphon $U\equiv p^5$, and not the constant graphon $B(W)\equiv p^6$. Moreover, the spectrum of the operators $\mathcal{A}_{B(W)}$ and $\mathcal{A}_{B(U)}$ associated to $B(W)$ and $B(U)$ are clearly different. This shows that convergence in $1$-cut norm is not enough for the convergence in cut norm of the vertex-vertex intersection graphon from~\eqref{eq:DefStrangeContrHype} and also for the convergence of the spectrum of these graphons. 
\end{example}

For this reason, for vertex-vertex intersection graphons of $3$-graphons it is appropriate to consider a different metric. Actually, in order to introduce this metric we will have to consider an extension of a $3$-graphon that is a $3$-hypergraphon. A $3$-hypergraphon is a measurable function $W:[0,1]^6\rightarrow[0,1]$ such that \[
W(x_1,x_2,x_3,x_{12},x_{13},x_{23})=W(x_{\sigma(1)},x_{\sigma(2)},x_{\sigma(3)},x_{\sigma(1)\sigma(2)},x_{\sigma(1)\sigma(3)},x_{\sigma(2)\sigma(2)})
\]
for every permutation $\sigma$ acting on the set $\{1,2,3\}$. These have been shown to be the natural objects for the convergence of $3$-uniform hypergraphs. See~\cite{HypergraphonsZhao,HypergraphsSzegedy2} for more background on hypergraphons and the trivial generalisation of these objects to $r>3$.

Observe that a $3$-graphon can be trivially interpreted as a $3$-hypergraphon that is constant in the last three coordinates. Therefore, the vertex-vertex intersection graphon~\eqref{eq:DefStrangeContrHype} can be naturally generalised to $3$-hypergraphons. Let $W$ be a $3$-hypergraphon, the vertex-vertex intersection graphon is
\begin{equation*}
B(W)(x_1,x_4)=\frac{1}{2}\int_{[0,1]^7}W(x_1,x_2,x_3,x_{12},x_{13},x_{23})W(x_2,x_3,x_4,x_{23},x_{24},x_{34})\mathrm{d}x_2\mathrm{d}x_3\mathrm{d}x_{12}\mathrm{d}x_{13}\mathrm{d}x_{24}\mathrm{d}x_{23}.
\end{equation*}
We will consider the $2$-cut norm for $3$-uniform hypergraphons, that is 
\begin{equation}\label{eq:CutNorm2_3unifhyp}
\begin{aligned}
&\|W-U\|_{\square,2}\\&=\sup_{f,g,h}\Big|\int_{[0,1]^6}\left(W(x_1,x_2,x_3,x_{12},x_{13},x_{23})-U(x_1,x_2,x_3,x_{12},x_{13},x_{23})\right)\times\\          &\qquad \qquad f(x_1,x_2,x_{12})g(x_2,x_3,x_{23})h(x_1,x_3,x_{13})\mathrm{d}x_1\mathrm{d}x_2\mathrm{d}x_3\mathrm{d}x_{12}\mathrm{d}x_{13}\mathrm{d}x_{23}\Big|
\end{aligned}
\end{equation}
where the supremum is taken over all $f,g,h$ measurable functions from $[0,1]^3$ to $[0,1]$ satisfying the symmetry condition \begin{equation*}
f(x_1,x_2,x_{12})=f(x_{\sigma(1)},x_{\sigma(1)},x_{{\sigma(1)\sigma(2)}}),
\end{equation*} where $\sigma$ is any permutation of $\{1,2,3\}$. Again, see for example~\cite{HypergraphonsZhao} for more details and the $r>3$ case. Observe that these objects are $P$-variables as defined in \cite{zucal2024probabilitygraphonspvariablesequivalent}.

We will show that the vertex-vertex intersection graphon is continuous with respect to the $2$-cut norm $\|\cdot\|_{\square}$.
\begin{lemma}\label{lemm:ContinuContrStrHypergraphon}
Let $W$ and $U$ be two $3$-uniform hypergraphons. For the contractions $B(W)$ and $B(U)$, the following inequality holds
\begin{equation*}
    \|B(W)-B(U)\|_{\square}\leq \|W-U\|_{\square,2}.
\end{equation*}
\end{lemma}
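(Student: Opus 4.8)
The strategy mirrors the proof of Lemma~\ref{lemm:ContinuCodegreeGraphon}: expand $B(W)-B(U)$ into a telescoping difference, then bound each resulting term by testing against the right class of functions appearing in the $2$-cut norm \eqref{eq:CutNorm2_3unifhyp}. First I would fix measurable $f,g:[0,1]\to[0,1]$ attaining (nearly) the supremum in $\|B(W)-B(U)\|_\square$, so that we must estimate
\begin{equation*}
\int_{[0,1]^2}\bigl(B(W)(x_1,x_4)-B(U)(x_1,x_4)\bigr)f(x_1)g(x_4)\,\mathrm{d}x_1\mathrm{d}x_4.
\end{equation*}
Writing out the definition of $B$ as an integral over $x_2,x_3$ (and the decoration coordinates, in the hypergraphon case), the integrand contains a product of two copies of $W$ (resp.\ $U$). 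I would split
\begin{equation*}
W\cdot W-U\cdot U=(W-U)\cdot W+U\cdot(W-U),
\end{equation*}
giving two terms, each of which is an integral of a single difference $W-U$ against the other hypergraph times $f(x_1)g(x_4)$.

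Next I would check that, in each of the two terms, the remaining factors assemble into an admissible triple of test functions for $\|\cdot\|_{\square,2}$. In the first term, the relevant copy of $W-U$ has arguments $(x_1,x_2,x_3,x_{12},x_{13},x_{23})$; the accompanying factors are $f(x_1)$, $g(x_4)$, and $W(x_2,x_3,x_4,x_{23},x_{24},x_{34})$, together with the factor $\tfrac12$. The point is that $f(x_1)$ can be absorbed into the test function on the block containing coordinate $1$, the factor $g(x_4)\cdot W(x_2,x_3,x_4,\ldots)$ (integrated over $x_4,x_{24},x_{34}$) depends only on $(x_2,x_3,x_{23})$ and takes values in $[0,1]$ since $W\le 1$ and $g\le 1$ — so it is a legitimate test function on the $(x_2,x_3,x_{23})$-block — and the third block $(x_1,x_3,x_{13})$ can simply take the constant test function $1$. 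One must verify the required permutation-symmetry of these three test functions, but this is immediate here because each is essentially a function of a single original vertex coordinate or a simple symmetric integral. With $\tfrac12$ pulled out and the other half-term contributing the other $\tfrac12$, each term is bounded by $\tfrac12\|W-U\|_{\square,2}$, and summing gives the claim. The $3$-graphon case is the special case where the decoration coordinates are dummy.

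The main obstacle — really the only delicate point — is the bookkeeping of which coordinates belong to which of the three symmetric ``pair'' blocks $(x_i,x_j,x_{ij})$ in \eqref{eq:CutNorm2_3unifhyp}, and confirming that after integrating out the coordinates not shared between the two $W$-factors (namely $x_4$ and the decorations $x_{24},x_{34}$, plus $x_{12},x_{13}$ on the other side) what remains genuinely factorizes as $f_1(x_1,x_2,x_{12})\,g_1(x_2,x_3,x_{23})\,h_1(x_1,x_3,x_{13})$ with each factor $[0,1]$-valued and symmetric. I expect this to go through cleanly precisely because the second $W$-factor shares exactly the block $(x_2,x_3,x_{23})$ with the first, the external functions $f,g$ each touch a single vertex, and the leftover block $(x_1,x_3,x_{13})$ is free; but writing the substitution of variables carefully (so that the labels match the convention in \eqref{eq:CutNorm2_3unifhyp}) is where care is needed.
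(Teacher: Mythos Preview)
Your proposal is correct and follows essentially the same approach as the paper: the paper introduces the bilinear notation $I(U,W)(x_1,x_4)=\tfrac12\int U(\ldots)W(\ldots)$ and splits $B(U)-B(W)=I(U,U-W)+I(U-W,W)$, which is exactly your telescoping $W\cdot W-U\cdot U=(W-U)W+U(W-U)$, and then bounds each half by $\tfrac12\|W-U\|_{\square,2}$ using precisely the test functions you describe---$f(x_1)$ on one block, the constant $1$ on another, and $F(x_2,x_3,x_{23})=\int W(x_2,x_3,x_4,x_{23},x_{24},x_{34})g(x_4)\,\mathrm{d}(x_4,x_{24},x_{34})$ on the shared $(x_2,x_3,x_{23})$-block. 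If anything, you are slightly more careful than the paper in flagging the symmetry requirement on the test functions in \eqref{eq:CutNorm2_3unifhyp}, which the paper's proof does not explicitly address.
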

\begin{proof}
Denote by
\begin{equation*}
    I(U,W)(x_1,x_4) = \frac{1}{2}\int_{[0,1]^7}U(x_1,x_2,x_3,x_{12},x_{13},x_{23})W(x_2,x_3,x_4,x_{23},x_{24},x_{34})\,\mathrm{d}(x_2,x_3,x_{12},x_{23},x_{24},x_{34}).
\end{equation*}
Observe that $B(U)\equiv I(U,U)$ and $I(U,U) - I(U,W)\equiv I(U,U-W)$. Thus
\begin{equation*}
    \|B(W)-B(U)\|_\square \leq \|I(U,U)-I(W,U)\|_\square + \|I(U,W)-I(W,W)\|_\square.
\end{equation*}
By rearranging the integral in the expression $\|I(U,U)-I(W,U)\|_\square=\|I(U-W,U)\|_\square$ we find
\begin{equation*}
\begin{aligned}
    &\int_{[0,1]^2} I(U-W,U)(x_1,x_4)f(x_1)g(x_4)\,\mathrm{d}x_1\mathrm{d}x_4\\
    &\qquad\qquad= \frac{1}{2}\int_{[0,1]^6}(U-W)(x_1,x_2,x_3,x_{12},x_{13},x_{23})f(x_1)F(x_2,x_3,x_{23})\,\mathrm{d}(x_1,x_2,x_3,x_{12},x_{13},x_{23}),
\end{aligned}
\end{equation*}
where $F(x_2,x_3,x_{23})=\int W(x_2,x_3,x_4,x_{23},x_{24},x_{34})g(x_4)\,\mathrm{d}(x_4,x_{24},x_{34})$. Thus $\|I(U-W,U)\|_\square\leq \frac{1}{2}\|U-W\|_{\square,2}$, and by symmetry we obtain the result.
\end{proof}
This directly implies the spectral convergence of the vertex-vertex intersection graphon. 
\begin{corollary}\label{Cor:2cutnormConvMatr}
Let $(W_n)_n$ be a sequence of $3$-hypergraphons converging to a  $3$-hypergraphon $W$ in $2$-cut norm $\|\cdot\|_{\square,2}$, then the sequence of graphons $(B(W)_n)_n$ converges to $B(W)$ in cut norm $ \|\cdot\|_{\square}$. Moreover, the spectrum of $(\mathcal{A}_{B(W)_n})_n$ converges to the spectrum of $\mathcal{A}_{B(W)}$.
\end{corollary}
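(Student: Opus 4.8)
The plan is to derive Corollary~\ref{Cor:2cutnormConvMatr} from Lemma~\ref{lemm:ContinuContrStrHypergraphon} in two independent pieces, exactly mirroring the structure of the proof of Corollary~\ref{cor:ConvergenceSpectrHyp} and Theorem~\ref{ThmSpecLapGraph}. First, the cut-norm convergence of $(B(W_n))_n$ to $B(W)$ is immediate: by hypothesis $\|W_n-W\|_{\square,2}\to 0$, and Lemma~\ref{lemm:ContinuContrStrHypergraphon} gives $\|B(W_n)-B(W)\|_{\square}\leq\|W_n-W\|_{\square,2}\to 0$. (One should note that $B(W)$ is a genuine graphon, i.e.\ takes values in $[0,1]$; this follows since the integrand $W(x_1,\ldots)W(x_2,\ldots)$ is bounded by $1$ and one integrates over a set of measure $1$ after the factor $\tfrac12$ is compensated by the range of the internal variables, so the construction lands in $[0,1]$, and it is symmetric in its two arguments by the symmetry of $W$.)

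Second, for the spectral convergence I would argue via homomorphism densities and moments, as in Example~\ref{ExHomDensCycl} and the proof of Theorem~\ref{ThmSpecLapGraph}. Since $B(W_n)\to B(W)$ in $\delta_\square$ (cut-norm convergence implies cut-metric convergence trivially, taking $\varphi=\mathrm{id}$), Proposition~\ref{prop:counting lemma} — or directly the graphon Counting Lemma — yields $t(C_k,B(W_n))\to t(C_k,B(W))$ for every cycle $C_k$, $k\geq 1$. By Example~\ref{ExHomDensCycl}, $t(C_k,B(W_n))=\sum_{\lambda\in\operatorname{spec}(\mathcal{A}_{B(W_n)})}\lambda^k$, so all moments of the spectra converge. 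One then invokes the standard argument (as referenced in the proof of Theorem~\ref{ThmSpecLapGraph}, see \cite[Theorem 11.54]{LovaszGraphLimits}): the operators $\mathcal{A}_{B(W_n)}$ and $\mathcal{A}_{B(W)}$ are Hilbert--Schmidt, hence compact with real discrete spectra accumulating only at $0$, their spectra are uniformly bounded (by $\|\mathcal{A}_{B(W)}\|_{op}\leq 1$ and similarly for each $W_n$), and convergence of all power-sum moments of such uniformly bounded sequences of discrete real multisets forces pointwise convergence of the ordered eigenvalue sequences. This gives the claimed pointwise convergence $\operatorname{spec}(\mathcal{A}_{B(W_n)})\to\operatorname{spec}(\mathcal{A}_{B(W)})$.

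I expect no serious obstacle here: the corollary is essentially a packaging of Lemma~\ref{lemm:ContinuContrStrHypergraphon} with already-established machinery. The only mildly delicate point is the passage from moment convergence to eigenvalue convergence, but this is precisely the content already used (and cited) in Theorem~\ref{ThmSpecLapGraph}, so I would simply refer back to that proof and to \cite[Theorem 11.54]{LovaszGraphLimits} rather than reproving it. A secondary bookkeeping point worth a sentence is self-adjointness of $\mathcal{A}_{B(W)}$: unlike the Laplacian case this is automatic because $B(W)$ is a symmetric kernel, so the spectrum is real and the moment argument applies directly without needing an auxiliary weighted $L^2$ space.
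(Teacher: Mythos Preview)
Your proposal is correct and matches the paper's approach: the paper does not give an explicit proof but states the corollary as a direct consequence of Lemma~\ref{lemm:ContinuContrStrHypergraphon}, and your argument fills in precisely the details one would expect, mirroring the proofs of Corollary~\ref{cor:ConvergenceSpectrHyp} and Theorem~\ref{ThmSpecLapGraph}. Two cosmetic points: cycles $C_k$ should be taken for $k\geq 3$ (as in the paper's proof of Theorem~\ref{ThmSpecLapGraph}), and your parenthetical about $B(W)$ being $[0,1]$-valued is slightly garbled---the integral over $[0,1]^7$ of a $[0,1]$-valued integrand is at most $1$, so $B(W)\leq\tfrac12$---but the conclusion stands.
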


At this point one could choose many more examples of contractions of adjacency tensors of $r$-uniform hypergraphs to matrices and study their convergence. Intuitively, one can think of all this matrices as 'building blocks' that have to be composed to construct the homomorphism densities of hypergraphs. In particular, in order to obtain the convergence of a specific matrix and its spectrum one has to choose a compatible metric.

\section{Extensions and applications}
In many applications, requiring the hypergraph to be uniform is too restrictive. We show how our results extend to nonuniform hypergraphs. As mentioned, it is often important to analyse a random walk process on a hypergraph, which is not restricted to be uniform. We recall three random walk processes on hypergraphs that have appeared in the literature. 
Each process is described by matrices $D$ and $A$, where the former is diagonal, such that
\begin{equation*}
P(v_i \to v_j)=\frac{A_{ij}}{D_{ii}}.
\end{equation*}

\begin{example}\label{ex:first RW} In \cite{PhysRevE.101.022308} the definitions are
\begin{equation*}
D_{ii}=\sum_{e\in E:v_i\in e}(|e|-1)\quad\mbox{and}\quad
A_{ij}=|\{e\in E:v_i,v_j\in e\}|.
\end{equation*}
\end{example}

\begin{example}\label{ex:second RW}
In \cite{BANERJEE202182} the definitions are
\begin{equation*}
D_{ii}=|\{e\in E:v_i\in e\}|\quad\mbox{and}\quad
A_{ij}=\sum_{e\in E:v_i,v_j\in e}\frac{1}{|e|-1}.
\end{equation*}
\end{example}

\begin{example}\label{ex:third RW}
In \cite{carletti2020dynamical} the definitions are
\begin{equation*}
D_{ii}=\sum_{j\ne i}\sum_{e\in E:v_i,v_j\in e}(|e|-1)\quad\mbox{and}\quad
A_{ij}=\sum_{e\in E:v_i,v_j\in e}(|e|-1).
\end{equation*}
\end{example}

In \cite{MULAS202226} it is shown that each of the above examples corresponds to a random walk process on a (not uniquely determined) weighted graph induced by the random walk process itself. However, when the hypergraph is uniform, the induced weighted graphs can be chosen identically. 

Suppose that $H=(V,E)$ is a (not necessarily uniform) hypergraph with largest edge cardinality $R\geq 2$. We say such a hypergraph has rank $R$. We may decompose $H$ into its \textit{levels}, the $r$-uniform hypergraphs $H^{(r)}$ for $2\leq r\leq R$, defined by the $r$-uniform hypergraph $H^{(r)}=(V,E^{(r)})$ on the vertex set $V$ and edge set $E^{(r)}=\{e\in E : |e|=r\}$. We call $H^{(r)}$ the \textit{$r$-th level} of $H$, and with a slight abuse of notation, we write $H=(H^{(2)},\ldots,H^{(R)})$.  We denote by $\textup{codeg}_H^{(r)}$ and $d_H^{(r)}$ the degree and codegree functions restricted to the $r$-th level of $H$. Moreover, if $(H_n)_n$ is a sequence of hypergraphs each of rank $R$ then the results on convergent hypergraph sequences apply to each of these sequences. 

The following application is aimed at Examples~\ref{ex:first RW}--\ref{ex:third RW}, and is explained further in Remark~\ref{rem:limit of RW examples}. Let $p=(p_r)_{2\leq r\leq R}$ be a probability vector with $p_R>0$. For a rank $R$ hypergraph $H$, the associated $p$-weighted adjacency matrix is defined by
\begin{equation}\label{eq:p-weighted matrix}
    A(H; p) = \frac{1}{N^{R-2}}\sum_{r=2}^Rp_r N^{r-2} G[H^{(r)}],
\end{equation}
where we slightly abuse notation by writing $G[H^{(r)}]$ for the adjacency matrix of the graph $G[H^{(r)}]$

If we interpret the above equation where the $G[H^{(r)}]$ represent $G[W_{H^{(r)}}]$, it follows that for large $N$, the term $p_R N^{R-2} G[H^{(R)}]$ dominates due to the normalisation by $N^{R-2}$. This construction gives the following proposition.

\begin{proposition}\label{prop:nonunif limit}
Let $(H_n)_n$ be a sequence of rank $R$ hypergraphs such that $|V(H_n)|\to\infty$ and $\delta_\square(G[H_n^{(R)}], W^{(R)})\to 0$ for some graphon $W^{(R)}$. Then $\delta_\square (A(H_n;p), p_RW^{(R)})\to 0$.
\end{proposition}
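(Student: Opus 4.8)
The plan is to show that $A(H_n;p)$ differs from $p_R\,W^{(R)}$ by quantities that vanish in the cut distance, by splitting $A(H_n;p)$ according to \eqref{eq:p-weighted matrix} into the dominant top level and the lower-level remainder, and controlling each piece separately. First I would recall from Remark~\ref{rmk:linearHyp} (or rather its density analogue) that the codegree-section $G[H^{(r)}]$ has entries bounded by $N^{2-r}\textup{codeg}_{H^{(r)}}(u,v)\le N^{2-r}\binom{N}{r-2}\le 1$, so that, when read as a graphon $G[W_{H_n^{(r)}}]$, each $G[H_n^{(r)}]$ takes values in $[0,1]$; hence all the objects involved are genuine graphons and the cut norm of any of them is at most $1$.

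Next I would write
\begin{equation*}
    A(H_n;p) = p_R\,G[H_n^{(R)}] + \frac{1}{N^{R-2}}\sum_{r=2}^{R-1}p_r N^{r-2}\, G[H_n^{(R-1)}]\quad\text{(schematically)},
\end{equation*}
i.e.\ isolate the $r=R$ term, which already has coefficient $p_R$, and bound the remainder. For each $r<R$ the coefficient is $p_r N^{r-2}/N^{R-2} = p_r N^{r-R}\to 0$ as $N=|V(H_n)|\to\infty$, while $\|G[H_n^{(r)}]\|_\square\le 1$ uniformly; since $r$ ranges over the fixed finite set $\{2,\dots,R-1\}$ and $\sum_r p_r\le 1$, the whole remainder has cut norm at most $\sum_{r=2}^{R-1}p_r N^{r-R}\to 0$. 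Then by the triangle inequality for $\delta_\square$ (which is bounded above by $\|\cdot\|_\square$ applied with the identity relabelling),
\begin{equation*}
    \delta_\square\bigl(A(H_n;p),\,p_R W^{(R)}\bigr) \le p_R\,\delta_\square\bigl(G[H_n^{(R)}],\,W^{(R)}\bigr) + \sum_{r=2}^{R-1}p_r N^{r-R},
\end{equation*}
and both terms tend to $0$ by hypothesis and by the computation above. This finishes the argument.

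The only genuinely delicate point is the bookkeeping between matrices and their graphon representations: one must be consistent that $G[H^{(r)}]$ inside \eqref{eq:p-weighted matrix} is interpreted via $G[W_{H_n^{(r)}}]$ (as the paper flags just before Proposition~\ref{prop:nonunif limit}), so that the normalising factor $N^{r-2}$ in the definition of the codegree-section and the factor $N^{r-2}$ appearing in \eqref{eq:p-weighted matrix} interact correctly and the $r=R$ summand is exactly $p_R\,G[W_{H_n^{(R)}}]$ with no leftover power of $N$. Once this normalisation is pinned down, everything else is the elementary estimate above; there is no real obstacle, since $\delta_\square$ is translation-controlled by $\|\cdot\|_\square$ and the lower levels are crushed by the $N^{r-R}$ factors. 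A remark afterward (anticipating Remark~\ref{rem:limit of RW examples}) would note that the same splitting handles the degree matrices $D$ in Examples~\ref{ex:first RW}--\ref{ex:third RW}, so that the induced random walk operators also converge, provided a uniform lower bound on the limiting degree function holds so that Lemma~\ref{lem:continuity graphon laplacian} and Corollary~\ref{cor:ConvergenceSpectrHyp} apply.
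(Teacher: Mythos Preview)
Your argument is correct and follows exactly the paper's approach: split off the $r=R$ term and bound the remaining levels by $\sum_{r<R}p_r|V(H_n)|^{r-R}\|G[H_n^{(r)}]\|_\square$, which vanishes since each $\|G[H_n^{(r)}]\|_\square$ is bounded. The only slip is the typo $G[H_n^{(R-1)}]$ in your displayed ``schematic'' sum, which should read $G[H_n^{(r)}]$.
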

\begin{proof} We have
\begin{equation*}
    \delta_\square(A(H_n;p), p_RW^{(R)}) \leq \delta_\square(p_RG[H_n^{(R)}], p_RW^{(R)}) + \sum_{r=2}^{R-1}p_r |V(H_n)|^{r-R} \|G[H_n^{(r)}]\|_{\square},
\end{equation*}
where the terms $p_r\|G[H^{(r)}]\|_{\square}$ are bounded, thus the result follows.
\end{proof}

Moreover, if $(H_n^{(R)})_n$ converges with respect to the $1$-cut norm, then the limit of $(G[H_n^{(R)}])_n$ can be identified as $G[H^{(R)}]$. As a result we have $\delta_\square (A(H_n;p), p_RG[H^{(R)}])\to 0$.

\begin{remark}\label{rem:limit of RW examples}
Assume that the hypergraphs in Examples~\ref{ex:first RW}--\ref{ex:third RW} are of rank $R\geq 2$. Then we can represent the terms $D_{ii}$ and $A_{ij}$ in terms of linear combinations of $(d_H^{(r)}(v_i))_{2\leq r\leq R}$ and $(\textup{codeg}_H^{(r)}(v_i,v_j))_{2\leq r\leq R}$ respectively. For Example~\ref{ex:first RW} we have
\begin{equation*}
    D_{ii}=\sum_{r=2}^R (r-1)d_H^{(r)}(v_i)\quad\mbox{and}\quad A_{ij}=\sum_{r=2}^R \textup{codeg}_H^{(r)}(v_i,v_j),
\end{equation*}
for Example~\ref{ex:second RW} we have
\begin{equation*}
    D_{ii}=\sum_{r=2}^R d_H^{(r)}(v_i)\quad\mbox{and}\quad A_{ij}=\sum_{r=2}^R \frac{1}{r-1}\textup{codeg}_H^{(r)}(v_i,v_j),
\end{equation*}
and for Example~\ref{ex:third RW} 
\begin{equation*}
    D_{ii}=\sum_{r=2}^R (r-1)^2d_H^{(r)}(v_i)\quad\mbox{and}\quad A_{ij}=\sum_{r=2}^R (r-1)\textup{codeg}_H^{(r)}(v_i,v_j).
\end{equation*}
Since $\textup{codeg}_H^{(r)}=N^{r-2}G[H^{(r)}]$, one easily observes how to choose $p$ such that each of the preceding matrices $A$ is equal to $N^{R-2}A(H;p)$  up to a constant, with $A(H;p)$ from \eqref{eq:p-weighted matrix}. Hence, if $(H_n)_n$ is as in Proposition~\ref{prop:nonunif limit}, then for each of the above examples $A(H;p)$ converges to the same graphon with respect to the cut norm. Hence if we also have $d_{H_n}^{(R)}>\varepsilon$ for some $\varepsilon>0$ and $n\in \N$, then, by Theorem~\ref{ThmSpecLapGraph}, the associated random walk kernels all converge. In fact, they converge to the same limit 
\begin{equation*}\label{eq:nonuniform random walk limit}
    S_{W^{(R)}}(x_1,x_2)=\frac{1}{R-1}\frac{1}{d_{W^{(R)}}(x_1)}G[W^{(R)}](x_1,x_2).
\end{equation*}
\end{remark}

\begin{remark}       
    As pointed out in Remark~\ref{rmk:SparsityHypergraphs}, normalisations of the codegree-section graphs for sparse hypergraph sequences can still capture nontrivial limits. One could use this to define an analogue of \eqref{eq:p-weighted matrix} but extra care is needed now because the largest levels of the hypergraph sequences might interact in a nontrivial way with the other levels, differently from the dense case in which the largest levels dominate.
\end{remark}

\noindent{}\textbf{Acknowledgement.} \'A. B. and S. v.d. N. acknowledge support by the project “BeyondTheEdge: Higher-Order Networks and Dynamics” (European Union, REA Grant Agreement No. 101120085).

\section*{References}

\bibliographystyle{plain}
\bibliography{biblio}

\end{document}